\apptocmd{\sloppy}{\hbadness 10000\relax}{}{}                       
\newlength{\alphabet}
\setlist[description]{font=\normalfont}
\setlist[enumerate]{font=\normalfont}
\setlist[enumerate,1]{label = {(\arabic*)}}
\setlist[enumerate,2]{label = {(\arabic{enumi}.\arabic*)}}
\newcounter{dummy}
\newcommand\myitem[1][]{\item[#1]\refstepcounter{dummy}\def\@currentlabel{#1}}
\tikzset{
    symbol/.style={
        draw=none,
        every to/.append style={
            edge node={node [sloped, allow upside down, auto=false]{$#1$}}
        },
    },
}
\theoremstyle{plain}      
\newtheorem{theorem}     {Theorem} 
\newtheorem{proposition} [theorem] {Proposition}
\newtheorem{lemma}       [theorem] {Lemma}
\newtheorem{conjecture}  [theorem] {Conjecture}
\theoremstyle{definition} 
\newtheorem{definition}  [theorem] {Definition}
\newtheorem{example}     [theorem] {Example}
\newtheorem{remark}      [theorem] {Remark}
\NewDocumentCommand{\N}{}{\mathbb{N}} 
\NewDocumentCommand{\Z}{}{\mathbb{Z}} 
\NewDocumentCommand{\Q}{}{\mathbb{Q}} 
\NewDocumentCommand{\R}{}{\mathbb{R}} 
\NewDocumentCommand{\C}{}{\mathbb{C}} 
\title{Cube normalized symplectic capacities}
\author{Jean Gutt \and Miguel Pereira \and Vinicius G. B. Ramos}
\date{}
\begin{document}

\maketitle

\begin{abstract}
	We introduce a new normalization condition for symplectic capacities, which we call cube normalization. This condition is satisfied by the Lagrangian capacity and the cube capacity. Our main result is an analogue of the strong Viterbo conjecture for monotone toric domains in all dimensions. Moreover, we give a family of examples where standard normalized capacities coincide but not cube normalized ones. Along the way, we give an explicit formula for the Lagrangian capacity on a large class of toric domains.
\end{abstract}

\section{Introduction}
The study of symplectic embeddings is at the core of symplectic geometry. One of the most important tools in this study are symplectic capacities. A {\bf symplectic capacity} is a function which assigns to each symplectic manifold $(X,\omega)$ of a fixed dimension $2n$, possibly in some restricted class, a number $c(X,\omega)$ satisfying the following conditions:
\begin{enumerate}
	\item If there exists an embedding $\varphi:X_1\hookrightarrow X_2$ such that $\varphi^*\omega_2=\omega_1$, then
	\[c(X_1,\omega_1)\le c(X_2,\omega_2).\]
	\item If $r>0$, then
	\[c(X,r\cdot \omega)=r \cdot c(X,\omega).\]
\end{enumerate}
After Gromov's seminal work on symplectic embeddings \cite{Gromov}, many capacities were defined. The majority of these satisfy a normalization condition based on Gromov's non-squeezing. More precisely, let $B^{2n}(r)\subset\C^n$ denote the ball of radius $r$ and let $Z^{2n}(r)=B^{2}(r)\times\C^{n-1}$. As usual, the standard symplectic form on $\C^n (=\R^{2n})$ is defined by
\[\omega_0=\sum_{i=1}^n dx_i\wedge dy_i.\]
We say that a symplectic capacity is {\bf ball normalized}\footnote{A capacity satisfying condition 3) is usually called {\bf normalized} in the literature. We add the word ``ball'' in this paper because we will define another normalization condition below.} if
\begin{itemize}
\item[(3)] $c(B^{2n}(r),\omega_0)=c(Z^{2n}(r),\omega_0)=\pi r^2$.
\end{itemize}
The central question about ball normalized capacities is the following conjecture, which apparently has been folkore since the 1990s.
\begin{conjecture}[strong Viterbo conjecture]
\label{conj:V}
If $X$ is a convex domain in $\R^{2n}$, then all normalized symplectic capacities of $X$ are equal.
\end{conjecture}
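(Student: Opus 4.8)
The plan is to reduce \cref{conj:V} to the single identity $c_B(X)=c_Z(X)$ and then to attack that identity through the closed characteristics on $\partial X$. Write $c_B(X)=\sup\{\pi r^2:(B^{2n}(r),\omega_0)\hookrightarrow(X,\omega_0)\}$ for the Gromov width and $c_Z(X)=\inf\{\pi r^2:(X,\omega_0)\hookrightarrow(Z^{2n}(r),\omega_0)\}$ for the cylindrical capacity. Ball normalization together with monotonicity pins every normalized capacity $c$ between these two: choosing embeddings $B^{2n}(r)\hookrightarrow X$ with $\pi r^2\to c_B(X)$ gives $c(X)\ge c_B(X)$, and choosing $X\hookrightarrow Z^{2n}(r)$ with $\pi r^2\to c_Z(X)$ gives $c(X)\le c_Z(X)$. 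Hence $c_B(X)\le c(X)\le c_Z(X)$ for every normalized $c$, so it suffices to prove $c_B(X)=c_Z(X)$ for all convex $X$.

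To compute this common value I would insert the Ekeland--Hofer--Zehnder capacity $c_{\mathrm{EHZ}}$, which for a smooth convex domain equals the minimal action $A_{\min}(\partial X)$ over the closed characteristics foliating $\partial X$ and is itself normalized. The sandwich already yields $c_B(X)\le A_{\min}(\partial X)\le c_Z(X)$, so the identity would follow from two embedding statements: a \emph{filling}, that a ball of capacity $A_{\min}(\partial X)$ embeds in $X$, giving $c_B\ge A_{\min}$; and a \emph{squeezing}, that $X$ embeds in a cylinder of capacity $A_{\min}(\partial X)$, giving $c_Z\le A_{\min}$. After reducing to smooth strictly convex $X$ by Hausdorff approximation and using continuity of normalized capacities on convex bodies, I would try to build both embeddings from the action-minimizing characteristic $\gamma$: normalize coordinates so that $\gamma$ becomes the core circle $\partial B^2(r)\times\{0\}$ of the cylinder, use the linearized return map of the characteristic flow along $\gamma$ to straighten the transverse $\C^{n-1}$ directions into a symplectomorphism realizing the squeeze, and run the reverse construction to obtain the filling.

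The genuine obstacle is carrying out either embedding for an arbitrary convex body, and especially the squeezing $c_Z(X)\le A_{\min}(\partial X)$. There is no canonical symplectic normal form adapted to the full characteristic foliation of $\partial X$: the foliation can be dynamically complicated, the transverse straightening above is only local near $\gamma$, and convexity alone does not propagate it to a global symplectomorphism of $X$ into the cylinder. This is precisely where the strong Viterbo conjecture has resisted proof; the cases that have been settled --- ellipsoids and polydiscs by explicit embeddings, and monotone toric domains via the torus action (the analogue established in this paper) --- are exactly those admitting such adapted global coordinates. A limiting argument is of little help, since monotone toric domains are not dense among convex bodies, so one cannot approximate a general convex $X$ by bodies on which $c_B=c_Z$ is already known. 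Any complete proof must therefore supply a new mechanism, beyond symmetry, forcing the filling and the squeezing to be simultaneously tight for every convex domain.
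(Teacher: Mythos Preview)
The statement you are attempting to prove is stated in the paper as an open \emph{conjecture}, not a theorem; the paper gives no proof of it and does not claim one exists. Your proposal is therefore not to be compared against a paper proof --- there is none --- but assessed on its own merits as an attempted resolution of the strong Viterbo conjecture.

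Your reduction is standard and correct: ball normalization plus monotonicity sandwiches every normalized capacity between $c_B$ and $c^Z$, so the conjecture is equivalent to $c_B(X)=c^Z(X)$ for all convex $X$, and inserting $c_{\mathrm{EHZ}}=A_{\min}(\partial X)$ further reduces this to the pair of embedding statements you call ``filling'' and ``squeezing''. But from that point on you do not actually prove anything, and to your credit you say so explicitly: the attempt to globalize a transverse normal form along the action-minimizing characteristic $\gamma$ is only local, convexity does not propagate it, and you acknowledge that ``this is precisely where the strong Viterbo conjecture has resisted proof''. That self-diagnosis is accurate. What you have written is a survey of the known reduction together with a candid statement of the obstruction, not a proof. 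In particular, the squeezing inequality $c^Z(X)\le A_{\min}(\partial X)$ is the heart of the conjecture and remains open for general convex bodies; the approximation argument you dismiss is indeed unavailable, and no alternative mechanism is supplied. The proposal should therefore be read as an informed discussion of why the conjecture is hard, not as a proof attempt with a fixable gap.
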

We refer to \cite{GHR} for a presentation of known results around Conjecture \ref{conj:V}. The strong Viterbo conjecture is, in particular, proven for all monotone toric domains in dimension 4.

Some of the main examples of symplectic capacities that do not satisfy this ball normalization 3) come in sequences, see \cite{EH2,qech}. For all of these sequences, the first capacity is still ball normalized. Two other capacities stand alone not satisfying 3), namely the {\bf Lagrangian capacity} and the {\bf cube capacity}, defined by Cieliebak--Mohnke \cite{CM} and Gutt--Hutchings \cite{GH}, respectively. In this paper we will introduce a new normalization condition (which we call cube normalization) which is satisfied by these latter capacities. Our main result is an equivalent of the strong Viterbo conjecture for cube normalized capacities.
\begin{theorem}\label{thm:normalized}
	All cube normalized symplectic capacities coincide on all monotone toric domain in any dimension.
\end{theorem}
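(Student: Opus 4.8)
The plan is to prove the theorem by a sandwiching argument, in direct analogy with the way the strong Viterbo conjecture is established for ball normalized capacities on monotone toric domains. The two cube normalized capacities already available, namely the Lagrangian capacity $c_L$ and the cube capacity $c_\square$, should play the roles that the Gromov width and the cylindrical capacity play in the ball normalized setting: $c_L$ the \emph{smallest} cube normalized capacity and $c_\square$ the \emph{largest}. Concretely, for an arbitrary cube normalized capacity $c$ and any monotone toric domain $X$, I would first prove the two inequalities
\begin{equation}\label{eq:sandwich}
	c_L(X) \le c(X) \le c_\square(X),
\end{equation}
and then show that for monotone toric domains the outer terms agree, $c_L(X) = c_\square(X)$, which pins $c(X)$ to this common value regardless of the choice of $c$.

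The two inequalities in \eqref{eq:sandwich} should both follow from monotonicity, axiom~(1), together with the cube normalization. For the lower bound, I would use that any cube $C$ admitting a symplectic embedding $C \hookrightarrow X$ satisfies $c(C) \le c(X)$ by axiom~(1), while $c(C)$ is fixed by the cube normalization; taking the supremum over all such embedded cubes gives a lower bound for $c(X)$ that, by the explicit formula for the Lagrangian capacity on toric domains, equals $c_L(X)$. Dually, the upper bound would come from embedding $X$ into a cube-cylinder of controlled size, so that axiom~(1) and the normalization bound $c(X)$ from above by $c_\square(X)$. Both halves thus reduce to producing explicit packing and covering embeddings adapted to the moment region defining $X$.

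The crux is the equality $c_L(X) = c_\square(X)$ on monotone toric domains, and this is where I expect the main difficulty to lie. Here I would invoke the explicit formula for the Lagrangian capacity of a toric domain to compute $c_L(X)$ as a combinatorial quantity attached to the defining region $\Omega$ of $X$, and then show that evaluating the cube capacity on such $X$ returns the same quantity. The monotonicity hypothesis on $X$ is precisely what should force these two expressions to coincide: it is the condition under which the largest inscribed cube, which computes $c_L$, and the smallest circumscribed cube-cylinder, which computes $c_\square$, have the same size, leaving no gap to bridge. The hard part is the explicit evaluation of $c_\square$ on monotone toric domains in arbitrary dimension and the proof that it matches the Lagrangian formula; in dimension four one can cross-check against the known ball normalized case, but in higher dimensions the combinatorial identity must be argued directly from the geometry of $\Omega$ and the monotonicity condition.
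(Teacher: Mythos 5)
Your plan has the right skeleton --- squeeze an arbitrary cube normalized $c$ between an inscribed object and a circumscribed one, with monotonicity forcing the two to have the same size --- and this is indeed the mechanism of the paper's proof. But as written your sandwich is garbled in a way that matters. The a priori extremal cube normalized capacities are the two embedding capacities: for \emph{every} cube normalized $c$ one has $c_P(X) \le c(X) \le c^N(X)$, because $P_n(a) \hookrightarrow X$ forces $c(X) \ge c(P_n(a)) = a$ by axioms (1) and (2) plus normalization, while $X \hookrightarrow N_n(a)$ forces $c(X) \le a$. Your upper bound is attributed to ``$c_\square$, the cube capacity,'' but the cube capacity is the \emph{smallest} cube normalized capacity, not the largest; the covering mechanism you describe (``embedding $X$ into a cube-cylinder of controlled size'') produces the NDUC capacity $c^N$, a different capacity. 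The distinction is not cosmetic: on the paper's non-monotone examples $X_a$ with $1/4 < a < 1/2$ one has $c_P(X_a) = 1-2a < 1/2 = c^N(X_a)$ (\cref{prop:cpnXa}), and $c^N$ is itself cube normalized, so $c \le c_P$ fails in general; restricted to monotone domains it is true only a posteriori, so it cannot serve as a ``first step.'' The same objection applies to your lower bound $c_L(X) \le c(X)$: nothing in the axioms makes $c_L$ minimal among cube normalized capacities (your own mechanism --- embedded cubes --- yields $c_P$, not $c_L$), and on monotone toric domains the inequality is a consequence of the theorem rather than an ingredient. Worse, routing the argument through the explicit formula $c_L(X_\Omega) = \delta_\Omega$ imports the assumptions on linearized contact homology under which \cref{thm:lag cap convex concave} is proved, needlessly weakening the statement.

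The second, larger issue is your diagnosis of where the difficulty lies. The ``hard part'' you defer --- evaluating the cube capacity on monotone toric domains in all dimensions and matching it against the Lagrangian formula --- is not needed at all, because no particular capacity has to be computed. Set $\delta_\Omega = \sup\{a \mid (a,\ldots,a) \in \Omega\}$. Monotonicity (outward normals along $\partial_+\Omega$ with nonnegative components) gives, by an elementary argument about $\Omega$ alone, the two inclusions
\begin{equation*}
	P_n(\delta_\Omega) \subset X_\Omega \subset N_n(\delta_\Omega),
\end{equation*}
and then for the \emph{arbitrary} cube normalized $c$ the axioms immediately yield
\begin{equation*}
	\delta_\Omega = c\big(P_n(\delta_\Omega)\big) \le c(X_\Omega) \le c\big(N_n(\delta_\Omega)\big) = \delta_\Omega,
\end{equation*}
where the outer equalities are conformality applied to $c(P_n(1)) = c(N_n(1)) = 1$. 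This is the paper's entire proof: the inclusions do all the work, and the monotonicity over inclusions plus the normalization pin $c(X_\Omega) = \delta_\Omega$ directly, with no appeal to $c_P$, $c^N$, or $c_L$ by name and no ECH, contact homology, or combinatorial identity in sight. To repair your write-up: drop $c_L$ and the capacity evaluations, replace the claimed sandwich by the trivial one $c_P \le c \le c^N$ if you want named extremes, and supply the one genuinely load-bearing step, namely the proof of the displayed inclusions from the monotonicity hypothesis.
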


This paper is organized as follows. In Section 2, we define the cube normalization and prove Theorem \ref{thm:normalized}. In Section 3, we provide an explicit formula for the Lagrangian capacity on a large class of toric domains encompassing monotone toric domains. In Section 4, we study cube normalized capacities of an interesting class of examples of non-monotone toric domains and we show that for some parameters, ball normalized capacities coincide while cube normalized do not. Finally, in Section 5, we find an upper bound for the cube capacity of a large class of weakly convex toric domains, which is used in Section 4.

\section{A new normalization condition}
Given a domain\footnote{In this article, a domain is the closure of a non-empty open set.} $\Omega\subset\R^n_{\ge 0}$, define the toric domain
\[X_\Omega=\mu^{-1}(\Omega)=\left\{(z_1,\dots,z_n)\in\C^n\mid (\pi|z_1|^2,\dots,\pi|z_n|^2)\in\Omega\right\}\] 
where the map $\mu:\C^n\to[0,+\infty)^n\,:\,(z_1,\ldots, z_n)\mapsto(\pi|z_1|^2,\dots,\pi|z_n|^2)$ is the periodic moment map.
We let \[\partial_+\Omega=\left\{p=(p_1,\dots,p_n)\in\partial \Omega\mid p_i>0 \text{ for }i=1,\dots,n.\right\}.\] Recall from \cite{GHR} that a {\bf monotone toric domain} is a compact toric domain with smooth boundary such that for every $p\in\partial_+\Omega$, the outward pointing normal vector at $p$, $\nu=(\nu_1,\dots,\nu_n)$ satifies $\nu_i\ge 0$ for $i=1,\dots,n$. Note that a monotone toric domain is the limit of toric domains $X_{\Omega'}$ where $\Omega'$ is bounded by the coordinate hyperplanes and the graph of a function whose partial derivatives are all negative, see the proof of \cite[Lemma 3.2]{GHR}.

Consider the following examples of toric domains:
\begin{IEEEeqnarray*}{lrClCrCl}
    \textrm{The Ball}     \quad & B_n(a) & \coloneqq & \mu^{-1}(\Omega_{B_n(a)}), & \quad & \Omega_{B_n(a)} & \coloneqq & \{ x \in \R^n_{\geq 0} \mid x_1 + \cdots + x_n \leq a \}, \\
    \textrm{The Cylinder} \quad & Z_n(a) & \coloneqq & \mu^{-1}(\Omega_{Z_n(a)}), & \quad & \Omega_{Z_n(a)} & \coloneqq & \{ x \in \R^n_{\geq 0} \mid x_1 \leq a \}, \\
    \textrm{The Cube}     \quad & P_n(a) & \coloneqq & \mu^{-1}(\Omega_{P_n(a)}), & \quad & \Omega_{P_n(a)} & \coloneqq & \{ x \in \R^n_{\geq 0} \mid \forall i = 1, \ldots, n \colon x_i \leq a \}, \\
    \textrm{The NDUC}     \quad & N_n(a) & \coloneqq & \mu^{-1}(\Omega_{N_n(a)}), & \quad & \Omega_{N_n(a)} & \coloneqq & \{ x \in \R^n_{\geq 0} \mid \exists i = 1, \ldots, n \colon x_i \leq a \}.
\end{IEEEeqnarray*}
Here, NDUC stands for non-disjoint union of cylinders.
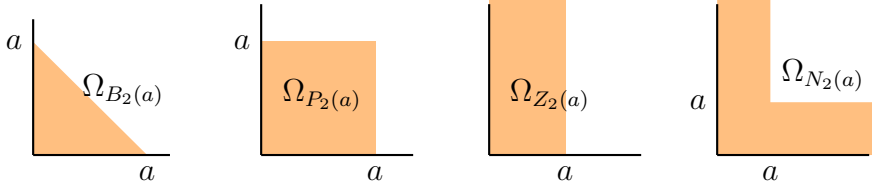
\begin{figure}[ht]
    \centering
\begin{tikzpicture}
	\fill[orange!50](0,0)--(0,1.5)--(1.5,0)--(0,0);
	\draw [thick](0,0)--(0,1.8) ;
	\draw [thick] (0,0)--(1.8,0) ;
	\draw (0,1.5) node[left]{$a$};
	\draw (1.5,0) node[below]{$a$};
	\draw (0.5,0.5) node[above right]{$\Omega_{B_2(a)}$};
	\fill[orange!50](3,0)--(3,1.5)--(4.5,1.5)--(4.5,0)--(3,0);
	\draw [thick](3,0)--(3,2) ;
	\draw [thick] (3,0)--(5,0) ;
	\draw (3,1.5) node[left]{$a$};
	\draw (4.5,0) node[below]{$a$};
	\draw (3.8,0.8) node{$\Omega_{P_2(a)}$};
	\fill[orange!50](6,0)--(6,2.1)--(7,2.1)--(7,0)--(6,0);
	\draw [thick](6,0)--(6,2) ;
	\draw [thick] (6,0)--(8,0) ;
	\draw (7,0) node[below]{$a$};
	\draw (6.8,0.8) node{$\Omega_{Z_2(a)}$};
	\fill[orange!50](9,0)--(9,2.1)--(9.7,2.1)--(9.7,0.7)--(11.1,0.7)--(11.1,0)--(9,0);
	\draw [thick](9,0)--(9,2) ;
	\draw [thick] (9,0)--(11,0) ;
	\draw (9.7,0) node[below]{$a$};
	\draw (9,0.7) node[left]{$a$};
	\draw (9.7,0.7) node[above right]{$\Omega_{N_2(a)}$};
\end{tikzpicture}
\caption{The domains $\Omega$ for the aforementioned domains for $n=2$}
\end{figure}
Within those toric domains, the ball normalization condition reformulates as
\[
	c\big(B_n(1)\big)=c\big(Z_n(1)\big)=1.
\]
This normalization stemmed out of Gromov's non-squeezing theorem \cite{Gromov} asserting that there exists a symplectic embedding $B_n(a)\hookrightarrow Z_n(b)$ if and only if $a\leq b$. The first examples of normalized capacities are the
\textbf{Gromov width} $c_B$ and the {\bf cylindrical capacity} $c^Z$ defined for any symplectic manifold $(X, \omega)$.
\begin{IEEEeqnarray*}{rCll}
    c_B(X,\omega) & \coloneqq & \sup & \{ a \mid \text{ there exists a symplectic embedding } B_n(a) \longrightarrow X \}, \\
    c^Z(X,\omega) & \coloneqq & \inf & \{ a \mid \text{ there exists a symplectic embedding } X \longrightarrow Z_n(a) \},
\end{IEEEeqnarray*}
Additional examples of normalized symplectic capacities are the Hofer-Zehnder capacity $c_{\textrm{HZ}}$ defined in \cite{HZ} and the Viterbo capacity $c_{\textrm{SH}}$ defined in \cite{V}. There are also useful families of symplectic capacities parametrized by a positive integer $k$ including the Ekeland-Hofer capacities $c_k^{\textrm{EH}}$ defined in \cite{EH,EH2} using calculus of variations; the ``equivariant capacities'' $c_k^{\textrm{GH}}$ defined in \cite{GH} using positive equivariant symplectic homology; and in the four-dimensional case, the ECH capacities $c_k^{\textrm{ECH}}$ defined in \cite{qech} using embedded contact homology. For each of these families, the $k=1$ capacities $c_1^{\textrm{EH}}$, $c_1^{\textrm{CH}}$, and $c_1^{\textrm{ECH}}$ are normalized. For more about symplectic capacities in general we refer to \cite{chls, schlenk} and the references therein.

We now introduce a new normalization based on a ``non-squeezing theorem'' for the cube.
\begin{theorem}[{\cite[Proposition 1.20]{GH}}]
    \label{thm:gh cube capacity}
    There exists a symplectic embedding $P_n(a)\hookrightarrow N_n(b)$ if and only if $a\leq b$.
\end{theorem}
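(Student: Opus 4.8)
The plan is to treat the two implications separately; the forward one is an elementary inclusion while the reverse one carries all the content. For the ``if'' direction, assume $a \le b$. I would simply observe that $P_n(a) \subseteq N_n(b)$ as subsets of $\C^n$: if $z \in P_n(a)$ then $\pi |z_1|^2 \le a \le b$, so $z$ lies in the cylinder $\{ \pi |z_1|^2 \le b \}$ and hence in $N_n(b)$. The identity map is then the desired symplectic embedding, since it preserves $\omega_0$.

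For the ``only if'' direction I would reduce the statement to a single capacity computation. Let $c$ be any ball normalized capacity defined on both domains, for instance the first Ekeland--Hofer capacity $c_1^{\mathrm{EH}}$ (equivalently the first Gutt--Hutchings capacity $c_1^{\mathrm{GH}}$). Monotonicity applied to an embedding $P_n(a) \hookrightarrow N_n(b)$ gives $c(P_n(a)) \le c(N_n(b))$, so it suffices to establish the two values $c(P_n(a)) = a$ and $c(N_n(b)) = b$. The first is standard: $P_n(a)$ is the polydisc all of whose factors equal $a$, and its first capacity is the smallest factor, namely $a$. For the lower bound $c(N_n(b)) \ge b$ I would use that $Z_n(b) \subseteq N_n(b)$ (take $i = 1$ in the definition of the NDUC) together with the normalization $c(Z_n(b)) = b$.

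The heart of the argument, and the step I expect to be the main obstacle, is the upper bound $c(N_n(b)) \le b$. One route is combinatorial: since the complement $\R^n_{\ge 0} \setminus \Omega_{N_n(b)} = (b, \infty)^n$ is convex, the NDUC is of concave toric type, and I would read off $c_1$ from the boundary combinatorics, where the shortest closed characteristic lies on a single cylindrical facet and has action exactly $b$. A more self-contained route is via pseudoholomorphic curves: after a suitable compactification of $N_n(b)$, one seeks a holomorphic sphere through the image of the center of the cube in a class of symplectic area $b$; since $B_n(a) \subseteq P_n(a)$, intersecting this sphere with the image of the inscribed ball and invoking the monotonicity (isoperimetric) inequality forces its area to be at least $a$, whence $a \le b$. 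The genuine difficulty in this second route is that $N_n(b)$ is unbounded in every coordinate direction and its width-$b$ slabs are glued along the cylinders, so the compactification and the curve must be arranged so that the relevant class has area $b$ rather than the large size of the compactification, crucially using that the embedded image avoids the removed corner box $\{ x \in \R^n_{\ge 0} \mid x_i > b \text{ for all } i \}$.
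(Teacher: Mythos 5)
Your ``if'' direction is correct and is exactly the easy half: $\Omega_{P_n(a)} \subseteq \Omega_{Z_n(b)} \subseteq \Omega_{N_n(b)}$, so the identity map is the required embedding. The ``only if'' direction, however, fails at precisely the step you flagged as the main obstacle, and it fails irreparably: the bound $c(N_n(b)) \le b$ is false for \emph{every} ball normalized capacity. Indeed $B_n(nb) \subseteq N_n(b)$, since any $x \in \R^n_{\ge 0}$ outside $\Omega_{N_n(b)}$ has $x_i > b$ for all $i$ and hence $x_1 + \cdots + x_n > nb$; monotonicity and ball normalization then force $c(N_n(b)) \ge c(B_n(nb)) = nb$. (This is the same inclusion behind the paper's remarks that $c_B \le n\, c_P$ and that any cube normalized capacity takes the value $1/n$ on $B_n(1)$.) So the strongest conclusion your scheme can ever produce is $a \le nb$, losing exactly the factor $n$ that makes the theorem nontrivial. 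Both of your fallback routes fail for this same structural reason. First, $N_n(b)$ is not of concave toric type: $\Omega_{N_n(b)}$ is noncompact, and its compact truncations have nonconvex complement in $\R^n_{\ge 0}$, so they are neither convex nor concave --- this is why the NDUC is treated in the literature as a genuinely new model shape. The minimal action of a closed characteristic on $\partial N_n(b)$ is indeed $b$, but the identity between $c_1$ and the minimal action is a convexity phenomenon and fails here: $c_1^{\mathrm{EH}}(N_n(b)) \ge nb$ by the ball inclusion above. Second, no compactification of $N_n(b)$ can carry a $J$-holomorphic sphere of symplectic area $b$ through an arbitrary point: running Gromov's monotonicity argument at the center of the embedded ball $B_n(nb)$ would force any such sphere to have area at least $nb$, so the curve you need cannot exist in a class of area $b$.

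Note that the paper does not reprove this statement; it quotes \cite[Proposition 1.20]{GH}, and the known proof is of a genuinely different nature from yours: it uses the full sequence of capacities $c_k^{\mathrm{GH}}$ rather than a single normalized one. One has $c_k^{\mathrm{GH}}(P_n(a)) = ka$ for all $k$, while $c_k^{\mathrm{GH}}(N_n(b))$ grows like $kb$ up to a bounded error, so an embedding $P_n(a) \hookrightarrow N_n(b)$ yields $ka \le kb + O(1)$; dividing by $k$ and letting $k \to \infty$ kills the additive error and gives $a \le b$. In the language of this paper, the non-squeezing is detected by the cube normalized capacity $c^{\mathrm{GH}}_{\inf} = \liminf_k c_k^{\mathrm{GH}}/k$, which equals $a$ on $P_n(a)$ and $b$ on $N_n(b)$ (cf.\ \cref{exa:other are cube normalized}), and it is invisible to any ball normalized capacity --- that contrast is the very point of introducing cube normalization. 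A repair of your argument therefore cannot stay at the level of $c_1$ or of one closed curve through a point; you would need either the asymptotics of a capacity sequence as above, or a Lagrangian-type argument seeing the corner torus of the cube, in the spirit of \cref{lem:a min with exact symplectic manifold} and \cref{thm:lag cap any toric}.
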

This theorem, together with the previous discussion, motivates the following definition.
\begin{definition}
	We say that a symplectic capacity $c$ is \textbf{cube normalized} if
	\begin{IEEEeqnarray*}{c+x*}
        c(P_n(1)) = c(N_n(1)) = 1.
    \end{IEEEeqnarray*}
\end{definition}
We now wish to present examples of cube normalized symplectic capacities.

The first example is the {\bf cube capacity} $c_P$ \cite{GH}
\[
	c_P(X,\omega) := \sup\{ a \mid \text{ there exists a symplectic embedding } P_n(a) \longrightarrow X \},
\]
A second example is the {\bf NDUC capacity} $c^N$
\[
	c^N(X,\omega) := \inf\{ a \mid \text{ there exists a symplectic embedding } X \longrightarrow N_n(a) \}
\]

The first non immediate example of a cube normalized symplectic capacity was introduced by Cieliebak and Mohnke \cite{CM} and proved to be cube normalized by the second author in his PhD \cite{Per}.
Let $(X, \omega)$ be a symplectic manifold and let $L \subset X$ be a Lagrangian submanifold. The \textbf{minimal area} of $L$ is given by
\begin{IEEEeqnarray*}{c+x*}
    A_{\mathrm{min}}(L) \coloneqq \inf 
    \Big\{ \int_\sigma \omega \ \Big|\ \sigma \in \pi_2(X, L),\, \int_\sigma\omega > 0 \Big\}.
\end{IEEEeqnarray*}
The \textbf{Lagrangian capacity} of $(X,\omega)$ is defined as
\begin{IEEEeqnarray*}{c+x*}
    c_L(X,\omega) \coloneqq \sup \{ A_{\mathrm{min}}(L) \mid L \text{ is an embedded Lagrangian torus} \}. 
\end{IEEEeqnarray*}
\begin{theorem}[\cite{Per}]
	\[
		c_L(P_n(1)) = c_L(N_n(1)) = 1.
	\]
\end{theorem}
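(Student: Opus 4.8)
The plan is to squeeze both capacities between $1$ and $1$ by establishing the chain
\[
1 \;\le\; c_L(P_n(1)) \;\le\; c_L(N_n(1)) \;\le\; 1.
\]
The middle inequality comes for free: since $\Omega_{P_n(1)}\subset\Omega_{N_n(1)}$, the inclusion is a symplectic embedding $P_n(1)\hookrightarrow N_n(1)$, so monotonicity of symplectic capacities gives $c_L(P_n(1))\le c_L(N_n(1))$. Everything therefore reduces to the lower bound $c_L(P_n(1))\ge 1$ and the upper bound $c_L(N_n(1))\le 1$. Note that $N_n(1)$ is a noncompact and non-monotone toric domain, so neither \cref{thm:normalized} nor any monotonicity argument starting from $P_n(1)$ can supply this last bound; it carries the real content.

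For the lower bound I would exhibit explicit product tori. Writing $P_n(1)\cong D\times\cdots\times D$ with each factor a disk of symplectic area $1$, fix $a\in(0,1)$ and set $L_a=C_1\times\cdots\times C_n$, where $C_i\subset D$ is the circle enclosing area $a$; this is an embedded Lagrangian torus. Since $P_n(1)$ is contractible, the long exact sequence of the pair yields an isomorphism $\partial\colon\pi_2(P_n(1),L_a)\xrightarrow{\ \sim\ }\pi_1(L_a)=\Z^n$, and the generator $u_i$ with $\partial u_i=[C_i]$ is represented by the inner disk of area $a$. Because $\omega_0=d\lambda$ is exact, the symplectic area of a relative class depends only on its boundary, so $\sum_i k_i u_i$ has area $a\sum_i k_i$; hence the attained positive areas are exactly $\{am : m\in\Z_{>0}\}$ and $A_{\mathrm{min}}(L_a)=a$. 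Letting $a\nearrow 1$ gives $c_L(P_n(1))\ge 1$.

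The upper bound $c_L(N_n(1))\le 1$ is the heart of the matter, and I expect it to be the main obstacle. One must bound $A_{\mathrm{min}}(L)\le 1$ for \emph{every} embedded Lagrangian torus $L\subset N_n(1)$, including exotic Chekanov-type tori, so the elementary computation above is not enough. The tool is the punctured-holomorphic-curve machinery of Cieliebak--Mohnke \cite{CM}. The useful geometric feature of $N_n(1)=\bigcup_{i=1}^n\{\pi|z_i|^2\le 1\}$ is that each cylindrical piece $\{\pi|z_i|^2\le 1\}\cong B^2(1)\times\C^{n-1}$ is foliated by holomorphic disks $B^2(1)\times\{\mathrm{pt}\}$, each of symplectic area exactly $1$. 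Given $L$, one chooses a compatible almost complex structure adapted to one such foliation, compactifies a bounded neighbourhood of $L$, and runs a neck-stretching compactness argument to produce a nonconstant holomorphic disk with boundary on $L$ whose class in $\pi_2(N_n(1),L)$ is nontrivial and whose area is controlled by the area $1$ of the foliating disks; this yields $A_{\mathrm{min}}(L)\le 1$.

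The delicate points, and where the real work lies, are: (i) producing such a disk for an arbitrary Lagrangian torus, with area bounded uniformly by $1$; and (ii) handling the union-of-cylinders geometry of $N_n(1)$, which is neither a product nor bounded, so that the area-$1$ foliation genuinely controls the extracted curve. An alternative route, which may be the cleaner one to pursue, is to prove the comparison $c_L(X)\le\liminf_{k\to\infty}\tfrac1k\,c_k^{\mathrm{GH}}(X)$ via $S^1$-equivariant symplectic homology and then compute the Gutt--Hutchings capacities of $N_n(1)$, showing the limit equals $1$; the hard core---existence and area control of holomorphic curves---is the same in either approach. Combining the two bounds with the monotonicity inequality then forces $c_L(P_n(1))=c_L(N_n(1))=1$.
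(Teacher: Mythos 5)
Your lower bound and the monotonicity step are correct: the product-torus computation is precisely what \cref{lem:a min with exact symplectic manifold} packages, giving $A_{\mathrm{min}}\big(\mu^{-1}(a,\ldots,a)\big)=a$ and hence $c_L(P_n(1))\geq 1$ (in fact you could take $a=1$ outright, since the domains here are closed). The genuine gap is the one you flag yourself: the inequality $c_L(N_n(1))\leq 1$ is never proved. One must bound $A_{\mathrm{min}}(L)$ for \emph{every} embedded Lagrangian torus $L\subset N_n(1)$, and neither of your two routes is carried out --- the neck-stretching paragraph is a program, with its two ``delicate points'' (extracting a nonconstant disk of area at most $1$ for an arbitrary, possibly exotic torus, and coping with the noncompact union-of-cylinders geometry, which is not a product) left entirely open. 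As written, the proposal establishes only $1\leq c_L(P_n(1))\leq c_L(N_n(1))$, which is not the statement.

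For comparison, the paper does not reprove this statement at all: it is quoted from \cite{Per}, and it is an immediate special case of \cref{thm:lag cap convex concave}, since $P_n(1)$ is a convex toric domain and $N_n(1)$ is a concave one (its complement in $\R^n_{\geq 0}$ is convex), both with diagonal $\delta_\Omega=1$; indeed the paper later uses $c_L(N_n(a))\leq a$ in exactly this way in the proof of \cref{thm:lag cap any toric}. Moreover, the mechanism behind the upper bound in \cite{Per} is your second, ``alternative'' route rather than direct Cieliebak--Mohnke stretching inside $N_n(1)$: one proves a comparison of the form $c_L(X)\leq \liminf_{k}\tilde{\mathfrak{g}}_k^{\leq 1}(X)/k$ for the McDuff--Siegel capacities and then runs the chain through $\mathfrak{g}_k^{\leq 1}$ and $c_k^{\mathrm{GH}}$ on toric domains, cf.\ \cref{exa:other are cube normalized}; note also the paper's remark that the steps involving $\mathfrak{g}_k^{\leq 1}$ rest on stated assumptions about linearized contact homology. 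So your diagnosis of where the real content lies, and which route is likelier to succeed, is accurate, but to close the argument you would either have to execute that comparison-of-capacities program in full or simply invoke \cref{thm:lag cap convex concave} as the paper does.
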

The second author actually proved a stronger result. For any toric domain $X_{\Omega} \subset \C^n$, define its \textbf{diagonal} to be
\begin{IEEEeqnarray*}{c+x*}
    \delta_{\Omega} \coloneqq \sup \{ a \mid (a, \ldots, a) \in \Omega \}.
\end{IEEEeqnarray*}
\begin{theorem}[{\cite[Theorem 7.65]{Per}}]
    \phantomsection\label{thm:lag cap convex concave}
    If $X_\Omega$ is a convex or concave toric domain then%
    \begin{IEEEeqnarray*}{c+x*}
        c_L(X_{\Omega}) = \delta_\Omega.
    \end{IEEEeqnarray*}
\end{theorem}
%

\begin{remark}
    The proof of \cref{thm:lag cap convex concave} uses linearized contact homology, and this result is stated under some assumptions about this theory. For a more detailed discussion on these assumptions see \cite[Disclaimer 1.11]{Sie} and \cite[Section 7.1]{Per}.
\end{remark}

\begin{remark}
    \label{exa:other are cube normalized}
    The proof of \cref{thm:lag cap convex concave} uses other symplectic capacities, namely
    \begin{enumerate}
        \item the \textbf{Gutt--Hutchings capacities} from \cite{GH}, denoted by $c^{\mathrm{GH}}_k$;
        \item the \textbf{higher symplectic capacities} from \cite{Sie}, denoted by $\mathfrak{g}_k^{\leq 1}$;
        \item the \textbf{McDuff--Siegel capacities} from \cite{MS}, denoted by $\tilde{\mathfrak{g}}_k^{\leq 1}$.
    \end{enumerate}
    Inspecting the proof of this theorem, one sees that the proof extends word for word for any monotone toric domain, and that moreover
    \begin{IEEEeqnarray*}{c+x*}
        c_L(X_{\Omega}) 
        = \lim_{k \to +\infty} \frac{\tilde{\mathfrak{g}}_k^{\leq 1}(X_{\Omega})}{k} 
        = \lim_{k \to +\infty} \frac{{\mathfrak{g}}_k^{\leq 1}(X_{\Omega})}{k} 
        = \lim_{k \to +\infty} \frac{c^{\mathrm{GH}}_k(X_{\Omega})}{k}
        = \delta_{\Omega}
    \end{IEEEeqnarray*}
    for any monotone toric domain $X_{\Omega}$.
    \end{remark}
    One can therefore define cube normalized symplectic capacities as follows.
    \begin{definition}
    For a nondegenerate Liouville domain $(X,\lambda)$, let
    \begin{IEEEeqnarray*}{rCls+x*}
        c^{\mathrm{GH}}_{\inf}(X)               & \coloneqq & \liminf_{k} \frac{c_k^{\mathrm{GH}}(X)}{k}, \\
        \mathfrak{g}_{\inf}^{\leq 1}(X)         & \coloneqq & \liminf_{k} \frac{\mathfrak{g}_{k}^{\leq 1}(X)}{k}, \\
        \tilde{\mathfrak{g}}_{\inf}^{\leq 1}(X) & \coloneqq & \liminf_{k} \frac{\tilde{\mathfrak{g}}_{k}^{\leq 1}(X)}{k}.
    \end{IEEEeqnarray*}
    By Remark \ref{exa:other are cube normalized} the symplectic capacities $c^{\mathrm{GH}}_{\inf}$, $\mathfrak{g}_{\inf}^{\leq 1}$ and $\tilde{\mathfrak{g}}_{\inf}^{\leq 1}$ are cube normalized.
    \end{definition}
    Using the main result of \cite{GR} asserting that for all $k\geq1$ $c_k^{\mathrm{GH}}=c_k^{\mathrm{EH}}$, we have another cube normalized symplectic capacity
    \[
    	c^{\mathrm{EH}}_{\inf}(X) := \liminf_{k} \frac{c_k^{\mathrm{EH}}(X)}{k}.
    \]
	Note that the main result of \cite{GR} together with Remark \ref{exa:other are cube normalized} shows that for any monotone toric domain $X_\Omega$
	\[
		c_L(X_\Omega)=\lim_{k \to +\infty} \frac{c^{\mathrm{EH}}_k(X_{\Omega})}{k}.
	\]
	This answers (for the monotone toric case) a Question by Cieliebak-Mohnke \cite{CM} who asks whether this equality holds for all convex domains in $\R^{2n}$.
	
The following theorem, which is an analogue of Viterbo's strong conjecture is our main result:

\begin{theorem}
    \label{thm:cube normalized}
    All cube normalized capacities coincide on monotone toric domains in $\R^{2n}$.
\end{theorem}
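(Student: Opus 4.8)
The plan is to prove the theorem by a squeezing argument that pins every cube normalized capacity between the two extremal cube normalized capacities $c_P$ and $c^N$, and then to compute that both of these equal the diagonal $\delta_\Omega$ on a monotone toric domain $X_\Omega$.

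First I would establish the general sandwich: for any cube normalized capacity $c$ and any toric domain $X$,
\[ c_P(X) \le c(X) \le c^N(X). \]
For the left inequality, if $P_n(a)\hookrightarrow X$ is a symplectic embedding then monotonicity gives $c(P_n(a)) \le c(X)$; since $P_n(a)$ is the image of $P_n(1)$ under the scaling $z\mapsto\sqrt{a}\,z$ (which pulls back $\omega_0$ to $a\,\omega_0$), conformality together with the normalization $c(P_n(1))=1$ gives $c(P_n(a)) = a$, so $a \le c(X)$, and taking the supremum over admissible $a$ yields $c_P(X)\le c(X)$. The right inequality is dual: an embedding $X\hookrightarrow N_n(a)$ gives $c(X)\le c(N_n(a))=a$, and taking the infimum gives $c(X)\le c^N(X)$. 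That $c_P$ and $c^N$ are themselves cube normalized follows from \cref{thm:gh cube capacity} (together with the trivial inclusion $P_n\subseteq N_n$), which is what licenses using the normalization $c(P_n(1))=c(N_n(1))=1$ for them.

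The geometric heart of the argument is the computation $c_P(X_\Omega)=c^N(X_\Omega)=\delta_\Omega$, which rests on the fact that a monotone toric domain is downward closed: by the characterization recalled after the definition, $\Omega$ is the region under the graph of a function with negative partial derivatives (bounded by the coordinate hyperplanes), so $x\in\Omega$ and $0\le y\le x$ componentwise imply $y\in\Omega$. Since $\Omega$ is compact the supremum defining $\delta_\Omega$ is attained, whence $(\delta_\Omega,\dots,\delta_\Omega)\in\Omega$ and downward closedness gives $\Omega_{P_n(\delta_\Omega)}=[0,\delta_\Omega]^n\subseteq\Omega$; the resulting inclusion $P_n(\delta_\Omega)\subseteq X_\Omega$ shows $c_P(X_\Omega)\ge\delta_\Omega$. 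For the other capacity I would check $\Omega\subseteq\Omega_{N_n(\delta_\Omega)}$, i.e.\ that $\Omega$ contains no point all of whose coordinates exceed $\delta_\Omega$: if $x\in\Omega$ had $\min_i x_i>\delta_\Omega$, downward closedness would place $(\min_i x_i,\dots,\min_i x_i)$ in $\Omega$, contradicting the definition of $\delta_\Omega$. The inclusion $X_\Omega\subseteq N_n(\delta_\Omega)$ then gives $c^N(X_\Omega)\le\delta_\Omega$.

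Assembling these, for any cube normalized capacity $c$ we obtain
\[ \delta_\Omega \le c_P(X_\Omega) \le c(X_\Omega) \le c^N(X_\Omega) \le \delta_\Omega, \]
so $c(X_\Omega)=\delta_\Omega$ independently of $c$, which is the claim. I expect the only real subtlety to lie in the geometric step: justifying carefully that nonnegativity of the outward normals on $\partial_+\Omega$ indeed yields the downward-closedness used for both inclusions, and handling the smooth-boundary versus limiting description of monotone domains so that the closed-set inclusions are genuine symplectic embeddings. The capacity-theoretic sandwich itself is formal once the normalization of $c_P$ and $c^N$ is in hand.
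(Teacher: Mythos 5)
Your proof is correct and takes essentially the same route as the paper: both arguments rest on the inclusions $P_n(\delta_\Omega) \subset X_\Omega \subset N_n(\delta_\Omega)$ furnished by monotonicity, together with conformality and cube normalization giving $c(P_n(a)) = c(N_n(a)) = a$. Your intermediate sandwich $c_P \le c \le c^N$ and your explicit verification of downward-closedness (which the paper leaves implicit in the assertion that monotonicity yields the two inclusions) add detail but no new idea --- the paper simply applies $c$ directly to the two inclusions.
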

\begin{proof}
    Let $c$ be a cube normalized symplectic capacity and let $X_{\Omega}$ be a monotone toric domain in $\R^{2n}$. We are going to show that then the value of $c(X_\Omega)$ is determined.
    The monotonicity of $X_{\Omega}$ ensures that
    \[P_n(\delta_{\Omega}) \subset X_{\Omega} \subset N_n(\delta_{\Omega}).\] Then,
    \begin{IEEEeqnarray*}{rCls+x*}
        \delta_{\Omega}
        & =    & c(P_n(\delta_{\Omega})) & \quad [\text{since $c$ is cube normalized}] \\
        & \leq & c(X_{\Omega})           & \quad [\text{by monotonicity}] \\
        & \leq & c(N_n(\delta_{\Omega})) & \quad [\text{by monotonicity}] \\
        & =    & \delta_{\Omega}         & \quad [\text{since $c$ is cube normalized}].  & \qedhere
    \end{IEEEeqnarray*}
\end{proof}
As a corollary of \cref{thm:cube normalized}, we have the following formula for the value of cube normalized symplectic capacities on monotone toric domains.
\begin{theorem}
	Let $c$ be a cube normalized symplectic capacity and let $X_{\Omega}$ be a monotone toric domain in $\R^{2n}$. Then
	\[
		c(X_{\Omega}) = \delta_{\Omega}.
	\]
\end{theorem}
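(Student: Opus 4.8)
The plan is to read off the value $\delta_\Omega$ directly from the two-sided estimate already established in the proof of \cref{thm:cube normalized}, so that the statement is essentially that proof made quantitative. First I would recall the geometric input supplied by monotonicity: for a monotone toric domain one has the inclusions
\[
P_n(\delta_\Omega) \subset X_\Omega \subset N_n(\delta_\Omega).
\]
The left inclusion says that $\Omega$ contains the full cube $[0,\delta_\Omega]^n$, which follows because $(\delta_\Omega,\dots,\delta_\Omega)\in\Omega$ by definition of the diagonal, while the nonnegativity of the outward normals on $\partial_+\Omega$ forces $\Omega$ to be downward closed in the region that matters. The right inclusion says that $\Omega$ contains no point all of whose coordinates strictly exceed $\delta_\Omega$, again by the normal condition together with the maximality of $\delta_\Omega$.

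Next I would convert the cube normalization into the value of $c$ on cubes and NDUCs of arbitrary size using the conformality axiom. The dilation $z\mapsto\sqrt{a}\,z$ is a symplectomorphism from $(P_n(1),a\omega_0)$ onto $(P_n(a),\omega_0)$, since it sends $\Omega_{P_n(1)}$ to $\Omega_{P_n(a)}$ and pulls $\omega_0$ back to $a\omega_0$. Combining axiom 1 (applied to this symplectomorphism and its inverse) with axiom 2 and cube normalization gives
\[
c(P_n(a)) = c(P_n(1),a\omega_0) = a\,c(P_n(1)) = a,
\]
and the identical computation yields $c(N_n(a))=a$.

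Finally I would feed the chain of inclusions into the monotonicity axiom. Each inclusion is a symplectic embedding (the identity), so axiom 1 gives $c(P_n(\delta_\Omega))\le c(X_\Omega)\le c(N_n(\delta_\Omega))$, that is $\delta_\Omega\le c(X_\Omega)\le\delta_\Omega$, whence $c(X_\Omega)=\delta_\Omega$.

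I expect no genuine obstacle here: the only non-formal ingredient is the sandwiching $P_n(\delta_\Omega)\subset X_\Omega\subset N_n(\delta_\Omega)$, which is precisely what monotonicity provides and what was already invoked in \cref{thm:cube normalized}. Everything else is the two capacity axioms together with the normalization, so the corollary is immediate once \cref{thm:cube normalized} is in hand; indeed the displayed chain of (in)equalities in that proof already computes both bounds to be $\delta_\Omega$.
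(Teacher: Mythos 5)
Your proposal is correct and follows essentially the same route as the paper: the paper's proof of \cref{thm:cube normalized} is exactly this sandwich argument, using $P_n(\delta_\Omega)\subset X_\Omega\subset N_n(\delta_\Omega)$ together with monotonicity and cube normalization to pin $c(X_\Omega)$ between $\delta_\Omega$ and $\delta_\Omega$. The only difference is that you spell out two steps the paper leaves implicit --- the rescaling $c(P_n(a))=c(N_n(a))=a$ via the dilation $z\mapsto\sqrt{a}\,z$ and the conformality axiom, and the downward-closedness argument justifying the two inclusions --- both of which are correct.
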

In view of \cref{thm:cube normalized}, it is reasonable to conjecture the following:
\begin{conjecture}
	All cube normalized capacities coincide on convex domains in $\C^n$.
\end{conjecture}

We wish now to make a few comments on what precedes:
\begin{remark}
	The link between monotone toric and convex is studied intensively and is, at the moment, unclear. All monotone toric domains are dynamically convex\footnote{Convexity is not a symplectically invariant property. This was already pointed out a long time ago but only a few symplectic substitutions have been suggested. The most prominent one is \textbf{dynamical convexity}, introduced in \cite{HWZ2}, where they show that strict convexity guarantees dynamical convexity.} toric domains; however the converse is only true in $\R^4$. Examples of monotone toric domains not symplectomorphic to a convex domain where produced recently \cite{DGZ, CE}.
\end{remark}
\begin{remark}
    If $c$ is a cube normalized symplectic capacity, then $c$ is not normalized in the usual sense. Indeed, by \cref{thm:cube normalized}, if $c$ is cube normalized then $c(B_n(1)) = 1/n$ and $c(Z_n(1)) = 1$.
    We have the following inequalities (for any 2n-dimensional symplectic manifold $(X,\omega)$):
    \[
		c_P(X,\omega)\leq c_B(X,\omega)\leq nc_P(X,\omega).
    \]
    Those inequalities come from the optimal embeddings
    \[
    	B_n(a)\subset P_n(a)\subset B_n(na)
    \]
    We also have
    \[
    		c^N(X,\omega)\leq c^Z(X,\omega)
    \]
    coming from the inclusion $Z_n(a)\subset N_n(a)$.
    \begin{conjecture}
    		\[c^Z(X,\omega)\leq nc^N(X,\omega).\]
    \end{conjecture}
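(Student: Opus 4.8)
The plan is to reduce the conjecture to a single explicit symplectic embedding and then to attack that embedding by symplectic folding. The reduction is immediate: suppose that for every $a>0$ there is a symplectic embedding $N_n(a)\hookrightarrow Z_n(na)$. Then for any $(X,\omega)$ and any $b>c^N(X,\omega)$ there is an embedding $X\hookrightarrow N_n(b)$, and composing with $N_n(b)\hookrightarrow Z_n(nb)$ gives $X\hookrightarrow Z_n(nb)$, whence $c^Z(X,\omega)\le nb$. Letting $b\downarrow c^N(X,\omega)$ yields $c^Z(X,\omega)\le n\,c^N(X,\omega)$. Thus everything hinges on the embedding
\[
    N_n(a)\hookrightarrow Z_n(na).
\]
Conversely, applying the conjecture to $X=N_n(a)$ shows that this embedding (up to an arbitrarily small loss) is essentially equivalent to the statement, so it is genuinely the heart of the matter.

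Before constructing it, I would first check that the constant $n$ is forced, both to confirm the target domain and because the verification fixes the geometry. Take $X=B_n(na)$. Gromov's non-squeezing theorem gives $c^Z(B_n(na))=na$, while the inclusion $B_n(na)\subset N_n(a)$ gives $c^N(B_n(na))\le a$. Hence
\[
    \frac{c^Z(B_n(na))}{c^N(B_n(na))}\ge\frac{na}{a}=n,
\]
so no inequality $c^Z\le\kappa\,c^N$ can hold with $\kappa<n$. Moreover, if the conjecture holds then for $X=B_n(na)$ all the inequalities collapse to equalities, forcing $c^N(B_n(na))=a$ and showing that the sought embedding $N_n(a)\hookrightarrow Z_n(na)$ would be sharp.

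For the embedding itself I would use symplectic folding. Write $N_n(a)=\bigcup_{i=1}^n C_i$ with $C_i=\{\pi|z_i|^2\le a\}$, and partition the target into the $n$ slabs $S_i=\{(i-1)a\le\pi|z_1|^2\le ia\}$. The first cylinder $C_1=Z_n(a)$ maps into $S_1=\{\pi|z_1|^2\le a\}$ by the identity. The complementary region $N_n(a)\cap\{\pi|z_1|^2>a\}$ equals the product of the exterior of a disc of area $a$ in the $z_1$-plane with the lower-dimensional domain $X_{N_{n-1}(a)}$ in the remaining coordinates; it is unbounded in the direction $x_1=\pi|z_1|^2$ but constrained in the $z_i$-factor of each $C_i$. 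I would fold this unbounded $z_1$-direction into one of the free planes of the target, transferring the unboundedness into that free factor while confining $x_1$ to a slab of width $a$, and then iterate (one fold per remaining dimension) so that the piece coming from $C_i$ lands in $S_i$ and the total width consumed is exactly $na$.

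The hard part will be carrying out this multiple folding with the required sharpness. The cylinders $C_i$ overlap, so the folds must be arranged to keep their images disjoint and to stay within total width \emph{exactly} $na$; since folding constructions are notoriously non-sharp, recovering the optimal constant $n$ rather than something larger is the crux. I expect this to require a carefully chosen family of folds together with a limiting argument: as the domains are noncompact, one would embed $N_n(a-\varepsilon)$, or compact truncations, and pass to the limit, which suffices because $c^N$ and $c^Z$ are defined by infima. Reconciling the fold performed in a given coordinate plane with the NDUC constraint already present in that plane is the delicate point, and is, I suspect, precisely why the statement is recorded here as a conjecture rather than a theorem.
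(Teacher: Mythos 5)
Your reduction is correct and is in fact exactly how the paper frames the statement: the conjecture is recorded there as a conjecture, with the remark that it amounts to producing a symplectic embedding $N_n(a)\hookrightarrow Z_n(na)$, and that the case $n=2$ is the main technical point of \cite{GHR}. Your verification that the constant $n$ is sharp is also sound: $B_n(na)\subset N_n(a)$ gives $c^N(B_n(na))\le a$ (indeed $=a$, since $c^N$ is cube normalized and $\delta_{\Omega_{B_n(na)}}=a$), while non-squeezing gives $c^Z(B_n(na))=na$, so no constant below $n$ can work.

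However, what you have written is not a proof, and you say as much yourself: the entire content of the conjecture is the embedding $N_n(a)\hookrightarrow Z_n(na)$, and your folding scheme is only a plan for constructing it, with the decisive steps left open. The gap is concrete. First, folding constructions intrinsically lose width --- a fold in the $z_1$-direction consumes extra area in the $z_1$-plane beyond the two slabs being stacked --- so a naive iteration of $n-1$ folds will land in $Z_n(na+\varepsilon)$ only if each fold can be made with loss tending to zero, which is precisely the sharpness problem you flag but do not resolve; note that your own optimality computation shows the target embedding must be \emph{exactly} sharp, leaving no room for the usual folding slack except through a limiting argument over compact exhaustions that you gesture at but do not set up. Second, your iteration folds into ``one of the free planes of the target,'' but in the piece coming from $C_i$ the $z_i$-plane is not free: it carries the constraint $\pi|z_i|^2\le a$ that defines $C_i$, and the standard folding model requires an unconstrained (in fact unbounded) fiber direction to absorb the folded material. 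You acknowledge this conflict as ``the delicate point,'' but no mechanism for resolving it is proposed; for $n=2$ the known embedding of \cite{GHR} is itself a nontrivial bespoke construction rather than a simple fold, which is evidence that this obstruction is real. So the proposal correctly isolates the heart of the matter --- in agreement with the paper --- but does not close it; the statement remains, for you as for the paper, a conjecture.
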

    The conjecture is true for $n=2$. This is the main technical point of \cite{GHR}. This amounts to prove that there exists a symplectic embedding
    \[
    		N_n(a)\hookrightarrow Z_n(na).
    \]
\end{remark}
\begin{remark}
	The minimal area of a Lagrangian torus, $A_{\mathrm{min}}(L)$, is not continuous in $L$. Indeed on a toric domain $X_\Omega$, $\mu^{-1}(x)$ is a Lagrangian torus for $x=(x_1,\ldots,x_n)\in(\mathrm{int}\Omega\cup\partial_+\Omega)$.
	By \cref{lem:a min with exact symplectic manifold},
	\begin{equation}\label{eq:Amin}
		A_{\mathrm{min}}\big(\mu^{-1}(x)\big)=\inf\{k_1x_1+\cdots+k_nx_n\,|\,k_1,\ldots k_n\in\Z\}.
	\end{equation}
\end{remark}

\section{Computing of the Lagrangian capacity for a more general family of toric domains}

In this section, we will see how one can use \cref{thm:lag cap convex concave} to compute the Lagrangian capacity for a larger class of toric domains which are not necessarily monotone (see \cref{thm:lag cap any toric} below). For a toric domain $X_{\Omega}$, define
\begin{IEEEeqnarray*}{c+x*}
    \eta_{\Omega} \coloneqq \inf \{ a \mid X_{\Omega} \subset N_n(a) \}.
\end{IEEEeqnarray*}
Notice that if $X_{\Omega}$ is convex or concave, then $\delta_{\Omega} = \eta_{\Omega}$. To prove \cref{thm:lag cap any toric}, we will make use of the following lemma:

\begin{lemma}[{\cite[Lemma 6.16]{Per}}]
    \label{lem:a min with exact symplectic manifold}
    Let $(X,\lambda)$ be an exact symplectic manifold and $L \subset X$ be a Lagrangian submanifold. If $\pi_1(X) = 0$, then
    \begin{IEEEeqnarray*}{c+x*}
        A _{\mathrm{min}}(L) = \inf \left\{ \lambda(\rho) \ | \ \rho \in \pi_1(L), \ \lambda(\rho) > 0 \right\}.
    \end{IEEEeqnarray*}
\end{lemma}
\begin{proof}
    The diagram
    \begin{IEEEeqnarray*}{c+x*}
        \begin{tikzcd}
            \pi_2(X,L) \ar[dr, swap, "\omega"] \ar[r, two heads,"\partial"] & \pi_1(L) \ar[d, "\lambda"] \ar[r, "0"] & \pi_1(X) \\
            & \R
        \end{tikzcd}
    \end{IEEEeqnarray*}
    commutes, where $\partial([\sigma]) = [\sigma|_{S^1}]$, and the top row is exact.
\end{proof}

\begin{theorem}
    \label{thm:lag cap any toric}
    Let $X_{\Omega}$ be a toric domain. If $(\eta_{\Omega},\ldots,\eta_{\Omega}) \in \partial \Omega$ then%
    \begin{IEEEeqnarray*}{c+x*}
        c_L(X_{\Omega}) = \eta_{\Omega}.
    \end{IEEEeqnarray*}
\end{theorem}
\begin{proof}
    By definition of $\eta_{\Omega}$, we have $X_{\Omega} \subset N_n(\eta_{\Omega})$. Define $T \coloneqq \mu^{-1}(\eta_{\Omega},\ldots,\eta_{\Omega})$. Then $T$ is an embedded Lagrangian torus in $X_{\Omega}$ (see \cref{fig:main} for an illustration of $\eta_{\Omega}$, $T$, $\Omega$ and $\Omega_{N_n(\eta_{\Omega})}$). Therefore,
    \begin{IEEEeqnarray*}{rCls+x*}
        \eta_{\Omega}
        & =    & A_{\mathrm{min}}(T)     & \quad [\text{by \cref{lem:a min with exact symplectic manifold}}] \\
        & \leq & c_L(X_{\Omega})         & \quad [\text{by definition of $c_L$}] \\
        & \leq & c_L(N_n(\eta_{\Omega})) & \quad [\text{by monotonicity}] \\
        & \leq & \eta_{\Omega}           & \quad [\text{by \cref{thm:lag cap convex concave}}].                & \qedhere
    \end{IEEEeqnarray*}
\end{proof}
Note that \cref{thm:lag cap any toric} extends mutatis mutandis, using \cref{eq:Amin},  to the following
\begin{theorem}\label{thm:generalformulacL}
	Let $X_\Omega\subset N_n(\eta_{\Omega})$ be a toric domain in $\R^{2n}$ such that there exist a point $x\in\overline{\partial_+\Omega}\cap \partial_+N_n(\eta_{\Omega})$ of the form $x=(k_1\eta_{\Omega},\ldots,k_n\eta_{\Omega})$ where the $k_i\in\N$ (see \cref{fig:main}).
	Then,
	\[
		c_L(X_{\Omega}) = \eta_{\Omega}.
	\]
\end{theorem}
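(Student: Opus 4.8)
The plan is to mimic the proof of \cref{thm:lag cap any toric}, replacing the single constraint point $(\eta_\Omega,\ldots,\eta_\Omega)$ by the point $x=(k_1\eta_\Omega,\ldots,k_n\eta_\Omega)$ and using the refined area formula \cref{eq:Amin} in place of the bare application of \cref{lem:a min with exact symplectic manifold}. The upper bound $c_L(X_\Omega)\le\eta_\Omega$ is unchanged: from $X_\Omega\subset N_n(\eta_\Omega)$ together with monotonicity of $c_L$ and \cref{thm:lag cap convex concave} applied to the concave domain $N_n(\eta_\Omega)$, we get $c_L(X_\Omega)\le c_L(N_n(\eta_\Omega))\le\eta_\Omega$. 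So the content is entirely in producing a Lagrangian torus inside $X_\Omega$ whose minimal area is at least $\eta_\Omega$.

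First I would set $T\coloneqq\mu^{-1}(x)$ where $x=(k_1\eta_\Omega,\ldots,k_n\eta_\Omega)$. Since $x\in\overline{\partial_+\Omega}$ lies in $\Omega$ (more precisely in $\mathrm{int}\,\Omega\cup\partial_+\Omega$, so that all coordinates are positive — this is where the hypothesis $x\in\partial_+N_n(\eta_\Omega)$ forces each $k_i\eta_\Omega>0$), the fiber $T$ is an embedded Lagrangian torus in $X_\Omega$. Next I would invoke \cref{eq:Amin}, which computes
\begin{IEEEeqnarray*}{rCl}
    A_{\mathrm{min}}(T) & = & \inf\{m_1(k_1\eta_\Omega)+\cdots+m_n(k_n\eta_\Omega)\mid m_1,\ldots,m_n\in\Z\} \\
    & = & \eta_\Omega\cdot\inf\{m_1k_1+\cdots+m_nk_n\mid m_i\in\Z,\ \textstyle\sum_i m_ik_i>0\}.
\end{IEEEeqnarray*}
The key elementary fact is that for $k_1,\ldots,k_n\in\N$ (positive integers), the set of positive values of the linear form $\sum_i m_ik_i$ over integer $m_i$ is exactly the positive multiples of $g\coloneqq\gcd(k_1,\ldots,k_n)$, so its infimum is $g\ge 1$. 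Hence $A_{\mathrm{min}}(T)=g\,\eta_\Omega\ge\eta_\Omega$. Chaining this with the upper bound gives $\eta_\Omega\le A_{\mathrm{min}}(T)\le c_L(X_\Omega)\le\eta_\Omega$, forcing equality throughout (and incidentally forcing $g=1$).

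The only genuine subtlety — and the step I would be most careful about — is the justification that \cref{eq:Amin} applies to $T$, i.e.\ that $x$ indeed lands in the region $\mathrm{int}\,\Omega\cup\partial_+\Omega$ where the moment-map fiber is a genuine Lagrangian torus and the formula is valid. The hypothesis is stated as $x\in\overline{\partial_+\Omega}\cap\partial_+N_n(\eta_\Omega)$; membership in $\partial_+N_n(\eta_\Omega)$ guarantees all coordinates $k_i\eta_\Omega$ are strictly positive, and one must check that $x$ is not a point where $\partial\Omega$ meets a coordinate hyperplane (which would degenerate the torus). Everything else is a verbatim repetition of the argument for \cref{thm:lag cap any toric}, which is why the statement says it extends \emph{mutatis mutandis}; the phrase ``using \cref{eq:Amin}'' signals precisely that the single nontrivial change is swapping the trivial area computation ($A_{\mathrm{min}}=\eta_\Omega$ when all $k_i=1$) for the gcd computation above.
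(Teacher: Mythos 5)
Your argument is correct and is precisely the \emph{mutatis mutandis} extension the paper intends: the paper offers no separate proof of \cref{thm:generalformulacL}, saying only that \cref{thm:lag cap any toric} extends using \cref{eq:Amin}, and your version --- the same torus-fiber lower bound, now computed via the gcd, chained with the unchanged upper bound $c_L(X_\Omega)\le c_L(N_n(\eta_\Omega))\le\eta_\Omega$ --- fills in exactly those details, including the correct reading of \cref{eq:Amin} with the implicit positivity constraint and the check that $x$ genuinely lies in $\partial_+\Omega$. One small simplification you could add: membership $x\in\partial_+N_n(\eta_\Omega)$ forces $\min_i k_i=1$, so $\gcd(k_1,\ldots,k_n)=1$ holds automatically rather than being forced a posteriori by the chain of inequalities.
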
 
\begin{figure}[ht]
    \centering
    \begin{tikzpicture}
        \draw[->,color=black] (-0.5,0) -- (7,0);
        \draw[->,color=black] (0,-0.5) -- (0,5);
        \draw[-,color=blue, thick] (2,2) -- (2,4.5);
        \draw[-,color=blue, thick] (2,2) -- (6.5,2);
        \draw[dotted,color=black] (2,2) -- (0,2);
        \draw[dotted,color=black] (2,2) -- (2,0);
        \draw[color=black] (2,0) node[below] {\footnotesize{$\eta_\Omega$}};
        \draw[color=black] (0,2) node[left] {\footnotesize{$\eta_\Omega$}};
        \draw [black, thick] plot [smooth, tension=1] coordinates { (0,3) (1,4) (2,1.5) (4.1,1.96) (3.5,0.5) (4.5,0.4) (5.5,0.4) (6,0)};
        \draw[color=black] (1,1) node {\footnotesize{$\Omega$}};
        \draw[color=blue] (2,3) node[right] {\footnotesize{$\Omega_{N_n(\eta_\Omega)}$}};
        \draw[dotted,color=black] (4,2) -- (4,0);
        \draw[color=black] (4,0) node[below] {\footnotesize{$2\eta_\Omega$}};
        \node [red] at (4,2) {\footnotesize{\textbullet}};
        \draw[color=red] (4,2) node[above] {\footnotesize{$\Omega_T$}};
    \end{tikzpicture}
    \caption{Example of $X_{\Omega}$ satisfying the assumption in \cref{thm:generalformulacL}}
    \label{fig:main}
\end{figure}
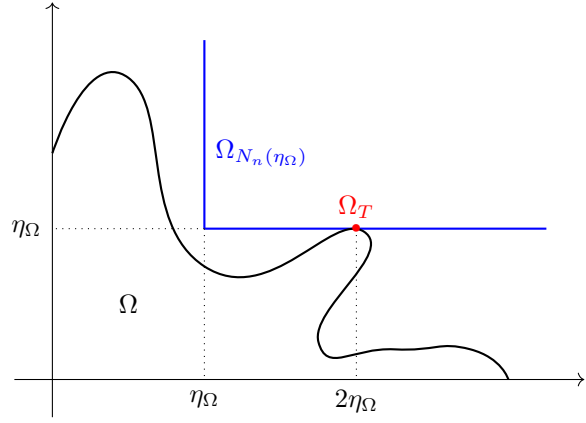

\section{An interesting nonexample}
We now study a family of examples coming from \cite{GHR} of non-monotone toric domains, and we determine when they satisfy the conclusion of \cref{thm:cube normalized}.

For $0<a<1/2$, let $\Omega_a$ be the convex polygon with corners $(0,0)$, $(1-2a,0)$, $(1-a,a)$, $(a,1-a)$ and $(0,1-2a)$, and write $X_a=X_{\Omega_a}$; see \cref{fig:example}. Then $X_a$ is a weakly convex (but not monotone) toric domain.
\begin{figure}[ht]
    \centering\label{fig:example}
	\begin{tikzpicture}[scale=2]
		\fill[orange!50](0,0)--(0,1.8)--(0.2,2)--(2,0.2)--(1.8,0)--(0,0);
		\draw[orange!50, dashed] (0,2.2)--(0.2,2);
		\draw[orange!50, dashed] (2,0.2)--(2.2,0);
		\draw [thick](0,0)--(0,2.3) ;
		\draw [thick] (0,0)--(2.3,0) ;
		\coordinate (A) at (1.6,0);
		\draw (A) node[below] {{$1-2a$}};
		\coordinate (B) at (2.2,0);
		\draw (B) node[below] {$1$};
		\filldraw (1.8,0) circle (1pt);
		\filldraw (2.2,0) circle (1pt);
		\filldraw (0,2.2) circle (1pt);
		\coordinate (C) at (0,1.8);
		\draw (C) node[left] {{$1-2a$}};
		\coordinate (D) at (0,2.2);
		\draw (D) node[left] {$1$};
		\filldraw (0,1.8) circle (1pt);
	\end{tikzpicture}
\caption{The domain $\Omega_a$}
\end{figure}
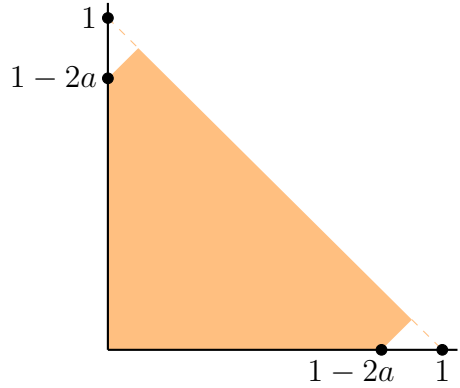

\begin{proposition}\label{prop:cpnXa}
The cubic, Lagrangian and NDUC capacities of $X_a$ are given as follows.
	\[\begin{aligned}c_P(X_a)&=\min\left(1-2a,\frac{1}{2}\right),\\
		c_L(X_a)=c^N(X_a)&=\frac{1}{2}.\end{aligned}
	\]
\end{proposition}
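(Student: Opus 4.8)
The plan is to treat the three capacities in the order $c_L$, then $c^N$, then $c_P$, using two soft facts that hold for \emph{every} cube normalized capacity $c$. By conformality and cube normalization, $c(N_n(b)) = b\,c(N_n(1)) = b$ and $c(P_n(b)) = b$, so for any symplectic manifold $X$ one has
\[
    c_P(X) \;\le\; c(X) \;\le\; c^N(X).
\]
Indeed, a symplectic embedding $P_n(b)\hookrightarrow X$ forces $b = c(P_n(b)) \le c(X)$ by monotonicity, and $X\hookrightarrow N_n(b)$ forces $c(X) \le c(N_n(b)) = b$; taking the supremum and infimum gives the two inequalities. I would apply this with the cube normalized capacity $c = c_L$, which reduces the whole statement to one geometric computation plus a single genuine embedding obstruction.

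First I would record the two invariants of $\Omega_a$. By the symmetry $(x_1,x_2)\mapsto(x_2,x_1)$ of the polygon, the diagonal leaves $\Omega_a$ through the midpoint of the edge joining $(1-a,a)$ and $(a,1-a)$, which is $(1/2,1/2)$; since this edge lies on the supporting line $x_1+x_2=1$, one gets $\delta_{\Omega_a} = 1/2$ and also $\eta_{\Omega_a} = \sup_{x\in\Omega_a}\min(x_1,x_2) = 1/2$, the supremum being attained at $(1/2,1/2)\in\partial\Omega_a$. Because $(\eta_{\Omega_a},\eta_{\Omega_a}) = (1/2,1/2)$ lies on $\partial\Omega_a$, \cref{thm:lag cap any toric} applies verbatim and yields $c_L(X_a) = \eta_{\Omega_a} = 1/2$.

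Next, $c^N(X_a) = 1/2$ follows by squeezing: the inclusion $X_a \subset N_2(1/2)$ gives $c^N(X_a) \le 1/2$, while the general inequality with $c = c_L$ gives $1/2 = c_L(X_a) \le c^N(X_a)$. For the cube capacity I would first note the elementary fact that $[0,c']^2 \subset \Omega_a$ precisely when $c' \le \min(1-2a,1/2)$, since by convexity this reduces to testing the corners $(c',0)$ (which needs $c'\le 1-2a$) and $(c',c')$ (which needs $c'\le 1/2$). Hence $P_2(\min(1-2a,1/2)) \subset X_a$ and therefore $c_P(X_a) \ge \min(1-2a,1/2)$. The matching upper bound splits in two: $c_P(X_a) \le c_L(X_a) = 1/2$ is immediate from the general inequality, so it only remains to prove $c_P(X_a) \le 1-2a$.

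This last inequality is the crux, and the only place where a nontrivial non-embedding result enters: one must rule out a symplectic embedding $P_2(c') \hookrightarrow X_a$ for $c' > 1-2a$, which is exactly the regime $a > 1/4$ where $1-2a$ is the binding bound. This obstruction is invisible to inclusions, and I expect it to be the main obstacle; it is supplied by the upper bound on the cube capacity of weakly convex toric domains proved in Section~5. Invoking that estimate gives $c_P(X_a)\le 1-2a$, and combining it with $c_P(X_a)\le 1/2$ yields $c_P(X_a) \le \min(1-2a,1/2)$, matching the lower bound and completing the proof. The remaining steps are the polygon geometry and the soft capacity comparisons recorded above.
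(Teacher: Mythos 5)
Your proof is correct and follows essentially the same route as the paper: $c_L(X_a)=1/2$ via \cref{thm:lag cap any toric} (using $\eta_{\Omega_a}=1/2$ with $(1/2,1/2)\in\partial\Omega_a$), $c^N(X_a)=1/2$ by squeezing between $c_L$ and the inclusion $X_a\subset N_2(1/2)$, the lower bound $c_P(X_a)\ge\min(1-2a,1/2)$ by the same cube inclusions, and the crucial upper bound $c_P(X_a)\le 1-2a$ from the weakly convex estimate of Theorem~\ref{thm:cubewc}. Packaging the soft steps as the general inequality $c_P\le c\le c^N$ for any cube normalized $c$ is a tidy but purely cosmetic difference from the paper's argument.
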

\begin{remark}
It follows from Proposition \ref{prop:cpnXa} that $c_P(X_a)\neq c^N(X_a)$ for $a> 1/4$. But in \cite{GHR} it was shown that $c_B(X_a)=c^Z(X_a)$ for all $a\le 1/3$. So for $1/4<a\le 1/3$, the Gromov and cylindrical capacities of $X_a$ coincide, but not the cubic and NDUC capacities.
\end{remark}

\begin{proof}
We note that $\eta_{\Omega_a}=1/2$ for all $a\le 1/2$ and that $(1/2,1/2)\in\Omega_a$. So it follows from Theorem \ref{thm:lag cap any toric} that $c_L(X_a)=1/2$. Since $X_a\subset N_2(a)$, it follows that
\[\frac{1}{2}=c_L(X_a)\le c^N(X_a)\le \frac{1}{2}.\]
So $c^N(X_a)=1/2$.

To compute the cubic capacities, we first observe that \[\begin{aligned}P_n\left(\frac{1}{2}\right)\subset X_a,&\text{ for }&0<a\le 1/4,\\ P_n(1-2a)\subset X_a,&\text{ for }&1/4 \le a <1/2.\end{aligned}\]
So $c_P(X_a)\ge \min(1-2a,1/2)$. Since $c_P(X_a)\le c^N(X_a)=1/2$, it follows that $c_P(X_a)=1/2=\min(1-2a,1/2)$ for $0<a\le 1/4$.

The fact that $c_P(X_a)\le 1-2a$ for $1/4<a<1/2$ follows from Theorem~\ref{thm:cubewc} below.
\end{proof}

\section{The cubic capacity of some weakly convex toric domains}
In this section we obtain an upper bound for the cubic capacity of some non-monotone toric domains, which will not in general coincide with their NDUC capacity.

A four-dimensional toric domain $X_\Omega$ is said to be weakly convex\footnote{Cristofaro-Gardiner defined this to be a convex toric domain in \cite{concaveconvex}, but usually a convex toric domain is defined to be a particular case of this, see \cite{GHR}, for example.} if $\Omega\subset\R^2_{\ge 0}$ is convex and $\partial_+\Omega$ is a piecewise smooth curve connecting the two coordinate axes, see Figure \ref{fig:wtc}. With an extra assumption, we can compute an upper bound for the cubic capacity of $X_\Omega$.

\begin{figure}\label{fig:wtc}
\centering
\begingroup%
  \makeatletter%
  \providecommand\color[2][]{%
    \errmessage{(Inkscape) Color is used for the text in Inkscape, but the package 'color.sty' is not loaded}%
    \renewcommand\color[2][]{}%
  }%
  \providecommand\transparent[1]{%
    \errmessage{(Inkscape) Transparency is used (non-zero) for the text in Inkscape, but the package 'transparent.sty' is not loaded}%
    \renewcommand\transparent[1]{}%
  }%
  \providecommand\rotatebox[2]{#2}%
  \newcommand*\fsize{\dimexpr\f@size pt\relax}%
  \newcommand*\lineheight[1]{\fontsize{\fsize}{#1\fsize}\selectfont}%
  \ifx\svgwidth\undefined%
    \setlength{\unitlength}{128.95602856bp}%
    \ifx\svgscale\undefined%
      \relax%
    \else%
      \setlength{\unitlength}{\unitlength * \real{\svgscale}}%
    \fi%
  \else%
    \setlength{\unitlength}{\svgwidth}%
  \fi%
  \global\let\svgwidth\undefined%
  \global\let\svgscale\undefined%
  \makeatother%
  \begin{picture}(1,1.00001776)%
    \lineheight{1}%
    \setlength\tabcolsep{0pt}%
    \put(0,0){\includegraphics[width=\unitlength,page=1]{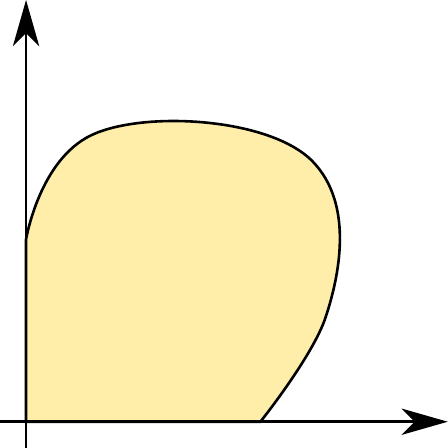}}%
    \put(0.3572646,0.34480203){\color[rgb]{0,0,0}\makebox(0,0)[lt]{\lineheight{1.25}\smash{\begin{tabular}[t]{l}$\Omega$\end{tabular}}}}%
  \end{picture}%
\endgroup%

\caption{A weakly convex toric domain $X_\Omega$}
\end{figure}

\begin{theorem}\label{thm:cubewc}
Let $X_\Omega$ be a weakly convex toric domain, where $\partial_+\Omega$ is parametrized by the curve $(x,y):[0,1]\to\R^2_{\ge 0}$ such that $y(0)=0$ and $x(1)=0$. Suppose that \[\max\left(\frac{x'(0)}{y'(0)},\frac{y'(1)}{x'(1)}\right)\le 1.\]
Then \[c_P(X_\Omega)\le \frac{x(0)+y(1)}{2}.\]
\end{theorem}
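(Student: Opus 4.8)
The plan is to obstruct cube embeddings with embedded contact homology. Since every $c_k^{\mathrm{ECH}}$ is a symplectic capacity, a symplectic embedding $P_2(a)\hookrightarrow X_\Omega$ forces $c_k^{\mathrm{ECH}}(P_2(a))\le c_k^{\mathrm{ECH}}(X_\Omega)$, and by conformality $c_k^{\mathrm{ECH}}(P_2(a))=a\,c_k^{\mathrm{ECH}}(P_2(1))$, where $c_k^{\mathrm{ECH}}(P_2(1))$ is explicitly computable (it equals $\min\{m+n : (m+1)(n+1)\ge k+1\}$). Taking the supremum over embeddable $a$ gives, for every $k\ge 1$,
\[
    c_P(X_\Omega)\ \le\ \frac{c_k^{\mathrm{ECH}}(X_\Omega)}{c_k^{\mathrm{ECH}}(P_2(1))}.
\]
It therefore suffices to exhibit a single $k$ for which $c_k^{\mathrm{ECH}}(X_\Omega)\le\tfrac12\,(x(0)+y(1))\,c_k^{\mathrm{ECH}}(P_2(1))$. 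A crude monotone target will not do: one checks on the domains $X_a$ of Section 4 that $X_\Omega$ embeds into no sufficiently thin domain (indeed $c^N$ is strictly larger than the asserted bound), so the obstruction must be genuinely cube-specific and must exploit the shape of $\partial_+\Omega$.

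To bound $c_k^{\mathrm{ECH}}(X_\Omega)$ from above I would use the combinatorial description of ECH capacities of convex toric domains in terms of convex generators, where $c_k^{\mathrm{ECH}}$ is the least action of an admissible generator of the appropriate ECH index; an upper bound is obtained simply by writing down one such generator. The natural candidates are the generators supported on the Reeb orbits lying over the two points $(x(0),0)$ and $(0,y(1))$ where $\partial_+\Omega$ meets the coordinate axes, whose actions are $x(0)$ and $y(1)$. The role of the hypothesis $\max\!\big(x'(0)/y'(0),\,y'(1)/x'(1)\big)\le 1$ is to control the index of these extremal orbits: it says exactly that $\partial_+\Omega$ leaves each axis steeply enough, and I expect this to be the condition under which the generator built from these two orbits is admissible with the correct index $2k$ and minimal action, so that the combinatorial optimum is realized with normalized action $\tfrac12(x(0)+y(1))$. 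Without the hypothesis the bulge of $\partial_+\Omega$ past the intercepts can change the index or produce competing cheaper generators, and the bound is expected to fail---consistent with the phenomenon, recorded in Section 4, that $c_P$ and $c^N$ need not agree.

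The hard part will be the two features that make $X_\Omega$ only \emph{weakly} convex. First, $\partial_+\Omega$ is not the graph of a decreasing function: it bulges past the intercepts, so the $x$- and $y$-extents of $\Omega$ strictly exceed $x(0)$ and $y(1)$, and the standard convex-toric ECH formula does not apply verbatim. I would handle this either by invoking the extension of the lattice-point/convex-generator combinatorics to weakly convex toric domains, or by approximating $X_\Omega$ from inside and outside by domains to which the formula applies and controlling the resulting capacities in the limit. Second, one must actually carry out the Conley--Zehnder index bookkeeping for the extremal orbits and verify that the slope hypothesis is precisely what forces the exhibited generator to have index $2k$ and to be the cheapest admissible one; this index computation, together with the weakly convex subtlety, is where the genuine work lies. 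As a consistency check, for the symmetric domains $X_a$ one has $x(0)=y(1)=1-2a$, the $k=1$ obstruction already gives $c_1^{\mathrm{ECH}}(X_a)=1-2a$, and the bound $c_P(X_a)\le 1-2a$ used in Section 4 is recovered.
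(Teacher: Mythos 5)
Your reduction to a single comparison of ECH capacities cannot work, and this is precisely why the paper does not use $c_k^{\mathrm{ECH}}$ at all. To get $c_P(X_\Omega)\le \frac{1}{2}(x(0)+y(1))$ from your inequality you would need one $k$ with $c_k^{\mathrm{ECH}}(X_\Omega)\le \frac{x(0)+y(1)}{2}\,c_k^{\mathrm{ECH}}(P_2(1))$, and no such $k$ exists in the relevant examples. For large $k$ this is killed by the volume property $\lim_k c_k^{\mathrm{ECH}}(X)^2/k = 4\operatorname{vol}(X)$: for the domains $X_a$ of Section~4 one has $\operatorname{area}(\Omega_a) > (1-2a)^2$ (e.g.\ $0.32$ versus $0.16$ at $a=0.3$), so the asymptotic ratio strictly exceeds $1-2a$. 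For small $k$ the problem is the index of the extremal orbits: your consistency check $c_1^{\mathrm{ECH}}(X_a)=1-2a$ is false, because by the index formula \eqref{eq:i} the orbit $e_{1,-1}$ of action $1-2a$ has combinatorial ECH index $0$, not $2$ (the self-pairing term $\max(x_vy_v,x_vy_v)=-1$ is negative), and indeed any product $e_{1,-1}^{m}e_{-1,1}^{m'}$ has index deficient relative to its action. The cheapest index-$2$ class has action $\min\bigl(1-a,\ 2(1-2a)\bigr)$, and more generally the index-$2m$ classes supported on the extremal orbits cost action $\approx m\bigl(x(0)+y(1)\bigr)$ against a target of $\approx \frac{x(0)+y(1)}{2}\,c_m^{\mathrm{ECH}}(P_2(1))$: the pure action--index obstruction falls short by exactly a factor of $2$ in the critical regime. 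So the missing idea is not the weakly convex approximation (which you correctly anticipate, and which the paper handles as you suggest) but the choice of invariant.

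What the paper actually uses is Hutchings' refinement from \cite{beyond}, adapted to weakly convex toric domains as \cref{thm:beyond}: the cobordism map on filtered ECH together with the $J_0$ invariant, applied to $\alpha'=e_{1,-1}^d\,e_{-1,1}^d\,e_{1,1}^2$ with $d\to\infty$. The decisive extra ingredient, with no counterpart in the data defining $c_k^{\mathrm{ECH}}$, is condition (iii) of \cref{def:beyond}: $x(\alpha)+y(\alpha)-h(\alpha)/2 \ge x(\alpha')+y(\alpha')+m(\alpha')-1$, whose total-multiplicity term $m(\alpha')$ grows linearly in $d$ and produces the denominator $2d_i+3k-1$ in the final estimate; letting $d\to\infty$ is what yields the constant $\frac{1}{2}\bigl(x(0)+y(1)\bigr)$ (your slope hypothesis enters exactly where you guessed, pinning $A_{\Omega'}(e_{1,-1})\approx x(0)$ and $A_{\Omega'}(e_{-1,1})\approx y(1)$, but it plays no index-theoretic role). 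Also needed, and absent from any capacity-style argument, are the product decompositions $\alpha=\prod_j\alpha_j$, $\alpha'=\prod_j\alpha_j'$ with the index and multiplicity constraints (b), (c), which let one localize the inequality to factors $\alpha_i'=e_{1,-1}^{d_i}e_{-1,1}^{d_i}e_{1,1}^k$ with $d_i\ge d/3$. In short: your framework (monotonicity plus conformality of a sequence of capacities) is structurally too weak for this statement, and the genuinely ``beyond capacities'' mechanism of \cref{thm:beyond} is the heart of the proof.
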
 

The proof of Theorem \ref{thm:cubewc} uses embedded contact homology. Namely, we need a version of \cite[Theorem 1.20]{beyond} for weakly convex toric domains. We now explain the context and the modifications that need to be made in the proof of \cite[Theorem 1.20]{beyond} for our purposes here. 

We need some definitions to state a more general version of \cite[Theorem 1.20]{beyond}. Let $X_\Omega$ be a weakly convex toric domain. We define a combinatorial Reeb orbit to be a pair $(v,s)$, where $v=(x_v,y_v)$ is a primitive vector in $\Z^2$ and $s=\{0,1\}$ such that $x_v\ge 0$ or $y_v\ge 0$. A combinatorial orbit set is a finite formal product
\[\alpha=\prod_{i=1}^k (v_i,s_i)^{m_i},\]
where $(v_i,s_i)$ are distinct combinatorial Reeb orbits and $m_i\in\Z_{\ge 1}$ such that $m_i=1$ whenever $s_i=0$. We define the following numbers.
\begin{align}
x(\alpha)&=\sum_{i=1}^k m_ix_{v_i},\label{eq:x}\\
y(\alpha)&=\sum_{i=1}^k m_iy_{v_i},\label{eq:y}\\
I(\alpha)&=x(\alpha)+y(\alpha)+\sum_{i,j=1}^k m_i m_j \max(x_{v_i} y_{v_j},x_{v_j} y_{v_i})+\sum_{i=1}^k s_i m_i,\label{eq:i}\\
m(\alpha)&=\sum_{i=1}^k m_i,\label{eq:m}\\
h(\alpha)&=\sum_{i=1}^k (1-s_i).\label{eq:h}
\end{align}
We note that none of those numbers depend on $\Omega$. The number $I(\alpha)$ is called the combinatorial ECH index of $\alpha$. We define the combinatorial action of $\alpha$ to be
\begin{equation*}
A_\Omega (\alpha)=\sum_{i=1}^k m_i \max\{v_i\cdot p\mid p\in\partial_+\Omega\}.
\end{equation*}
We now state a version of \cite[Definition 1.18]{beyond} for weakly convex toric domains.
\begin{definition}\label{def:beyond}
Let $X_{\Omega}$ and $X_{\Omega'}$ be weakly convex toric domains and let $\alpha$ and $\alpha'$ be combinatorial orbit sets. We write $\alpha\le_{\Omega,\Omega'} \alpha'$ if the following conditions hold:
\begin{itemize}
\item[(i)] $I(\alpha)=I(\alpha')$,
\item[(ii)] $A_\Omega(\alpha)\le A_{\Omega'}(\alpha')$,
\item[(iii)] $x(\alpha)+y(\alpha)-h(\alpha)/2\ge x(\alpha')+y(\alpha')+m(\alpha')-1$.
\end{itemize}
\end{definition}
The version of \cite[Theorem 1.20]{beyond} that we need is the following result.
\begin{theorem}\label{thm:beyond}
Let $X_\Omega$ and $X_{\Omega'}$ be weakly convex toric domains such that $X_\Omega\hookrightarrow X_{\Omega'}$. Let $\alpha'$ be an orbit set such that $I(\alpha')>0$ and $h(\alpha')=0$. Then there is an orbit set $\alpha$ with $I(\alpha)=I(\alpha')$ and product decompositions \[\alpha=\prod_{j=1}^l \alpha_j,\quad\alpha'=\prod_{j=1}^l \alpha_j',\] such that:
\begin{itemize}
\item[(a)] $\alpha_j\le_{\Omega,\Omega'} \alpha_j'$,
\item[(b)] Given $i,j$, if $\alpha_i=\alpha_j$ or $\alpha_i'=\alpha_j'$, then $\alpha_i$ and $\alpha_j$ have no combinatorial Reeb orbits in common with $s=1$.
\item[(c)] For any $\emptyset\neq S\subset\{1,\dots,l\}$, \[I\left(\prod_{j\in S} \alpha_j\right)=I\left(\prod_{j\in S} \alpha_j'\right)>0.\]
\end{itemize}
\end{theorem}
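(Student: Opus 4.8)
The plan is to reduce \cref{thm:beyond} to \cite[Theorem 1.20]{beyond} by isolating the single ingredient that genuinely changes when one passes from a convex to a merely weakly convex toric domain, namely the combinatorial model for the Reeb orbits, and then observing that the Seiberg--Witten and holomorphic-curve machinery underlying the proof in \cite{beyond} is insensitive to this weakening. Since $\Omega$ is convex and $\partial_+\Omega$ joins the two coordinate axes, $X_\Omega$ is a compact star-shaped domain, so $\partial X_\Omega$ is a contact three-sphere with $ECH(\partial X_\Omega)\cong ECH(S^3)$, which is $\Z$ in every nonnegative even degree. First I would perturb the degenerate toric contact form to a nondegenerate one in the Morse--Bott fashion of \cite{beyond}, so that each edge of $\partial_+\Omega$ contributes an elliptic and a hyperbolic orbit and the ECH generators are encoded by combinatorial orbit sets $\alpha=\prod_i (v_i,s_i)^{m_i}$.

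The core step is to re-establish the dictionary between these combinatorial data and the honest ECH invariants. For a convex toric domain the outward normals along $\partial_+\Omega$ remain in the closed first quadrant; the one new feature here is that, $\partial_+\Omega$ being only convex, the monotonically rotating normal now sweeps through primitive vectors with exactly one negative coordinate, which is precisely the relaxed condition ``$x_v\ge 0$ or $y_v\ge 0$'' in the definition of a combinatorial Reeb orbit. I would therefore verify that for these additional orbits the Conley--Zehnder indices and rotation numbers of the perturbed orbits, the writhe and relative-intersection contributions, and the partition conditions governing holomorphic asymptotics are all the ones predicted by the combinatorial formula \eqref{eq:i}, and that the symplectic action is still recovered by the support function $A_\Omega$. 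The delicate quantity is the linking term $\max(x_{v_i}y_{v_j},x_{v_j}y_{v_i})$: in the convex case all products are nonnegative, whereas now a self-term $m_i^2\,x_{v_i}y_{v_i}$ or a cross-term can be negative, and one must check that the $\max$ still records the correct relative intersection number of the corresponding orbit cylinders. I expect this lattice computation for orbits with a negative coordinate to be the main obstacle; notably it involves no analysis.

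With the dictionary confirmed, the rest is a transcription of \cite{beyond}. The embedding $X_\Omega\hookrightarrow X_{\Omega'}$ gives an exact cobordism from $\partial X_\Omega$ to $\partial X_{\Omega'}$ whose ECH cobordism map $ECH(\partial X_{\Omega'})\to ECH(\partial X_\Omega)$ is a grading-preserving isomorphism, both ends being $ECH(S^3)$. Applying it to the degree-$I(\alpha')$ generator $\alpha'$, which is purely elliptic because $h(\alpha')=0$ and nontrivial because $I(\alpha')>0$, and combining nonvanishing of the map with Gromov compactness, I would extract a broken holomorphic current of ECH index zero, asymptotic to $\alpha'$ at the top and to some orbit set $\alpha$ at the bottom; this forces $I(\alpha)=I(\alpha')$ and, by Stokes in the exact cobordism, $A_\Omega(\alpha)\le A_{\Omega'}(\alpha')$. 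Decomposing the current into connected components yields the matching product decompositions $\alpha=\prod_j\alpha_j$ and $\alpha'=\prod_j\alpha_j'$, and the ECH index inequality together with the partition conditions read off from the recomputed rotation numbers then gives, component by component, the relation $\alpha_j\le_{\Omega,\Omega'}\alpha_j'$ of \cref{def:beyond} (in particular condition (iii), the relative adjunction and genus bound), the absence of shared $s=1$ orbits in (b), and the positivity in (c) of the index of every nonempty sub-product. Throughout, the only thing to check is that each such ECH input is a local statement about the orbits and their asymptotics, hence unaffected by the relaxation of convexity once the index dictionary of the previous paragraph is in place.
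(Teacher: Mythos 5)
Your proposal is correct and follows essentially the same route as the paper: both reduce \cref{thm:beyond} to \cite[Theorem 1.20]{beyond} by re-verifying the combinatorial ECH dictionary (index formula \eqref{eq:i}, action as the support function, and the Chern class computation $c_\tau(\alpha)=x(\alpha)+y(\alpha)$) for the enlarged class of orbits allowed by weak convexity, and then transplanting the cobordism-map argument verbatim. The only differences are presentational: the paper makes explicit the inner/outer smooth approximations $\widetilde{\Omega}\subset\Omega$ and $\widetilde{\Omega}'\supset\Omega'$ with controlled slopes at the axes (which your Morse--Bott perturbation only gestures at), and it names the $J_0$-invariant as the source of condition (iii) of \cref{def:beyond}, which you describe equivalently as the relative adjunction/genus bound.
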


\begin{proof}
The proof is essentially the same as the one of \cite[Theorem 1.20]{beyond}. As in the proof of \cite[Theorem 5.6]{GHR}, we first approximate $\Omega$ by a domain $\widetilde{\Omega}\subset \Omega$ such that $\partial_+\widetilde{\Omega}$ is a smooth curve and the slopes of the tangent lines at the intersections with the $x$-axis and $y$-axis are $\varepsilon$ and $\varepsilon^{-1}$. We observe that for a given orbit set $\alpha$ and $\delta>0$, we can define $\widetilde{\Omega}$ so that $|A_{\Omega}(\alpha)-A_{\widetilde{\Omega}}(\alpha)|<\delta$. We define $\widetilde{\Omega}'\supset \Omega'$ satisfying the same properties as above, c.f. \cite[Lemma 5.4]{beyond}. In particular $X_{\widetilde{\Omega}}\hookrightarrow X_{\widetilde{\Omega}'}$.

We now briefly recall the embedded contact homology (ECH) chain complex. Let $(x,y):[0,1]\to\R^2$ be a parametrization of $\partial_+\widetilde{\Omega}$ such that $y(0)=x(1)=0$. So $y'(0)/x'(0)=x'(1)/y'(1)=\varepsilon$. We assume that $\varepsilon$ is a small irrational number and that $(x''(t),y''(t))\neq 0$ for $t\in[0,1]$. Then the standard Liouville form on $\R^4$ restricts to a contact form $\lambda_0$ on $\partial X_\Omega$ whose Reeb flow foliates $\mu^{-1}((x(t),y(t))$ for each $t\in[0,1]$. Then for each $t\in]0,1[$ such that $x'(t)/y'(t)\in\Q\cup\{\infty\}$, there is a unique $(p,q)\in\Z^2$ such that $p$ and $q$ are relatively prime and
\[(x'(t),y'(t))=c\cdot(p,q), \quad\text{for }c>0.\] So
the torus $T_{p,q}:=\mu^{-1}((x(t),y(t))$ is foliated by closed Reeb orbits. Note that $T_{(p,q)}$ is uniquely determined by $(p,q)$ since $X_\Omega$ is weakly convex. For a Reeb orbit $\gamma\in T_{p,q}$, its symplectic action is defined by
\[A_{\widetilde{\Omega}}(\gamma):=\int_\gamma \lambda_0.\] 
It is straight-forward to check that this action doesn't depend on $\gamma$. Indeed for every $\gamma\in T_{p,q}$, it follows from a simple calculation that
\[A_{\widetilde{\Omega}}(\gamma)=\max\{(p,q)\cdot x\mid x\in\partial_+ \widetilde{\Omega}\}=A_{\widetilde{\Omega}}((p,q),1).\] The only other Reeb orbits of $\lambda_0$ are the two circles $\mu^{-1}((x(0),y(0))$ and $\mu^{-1}((x(1),y(1))$. One can check that $\lambda_0$ is Morse--Bott. Given $L>0$, we can perturb the contact form in neighborhoods of the tori $T_{p,q}$ for which $A_{\widetilde{\Omega}}(\gamma)<L$ for $\gamma\in T_{p,q}$,
thus obtaining an elliptic and a hyperbolic Reeb orbit, denoted by $e_{(p,q)}$ and $h_{(p,q)}$, respectively. This is explained in more detail in \cite{qech} and \cite{concave}, for example. Let $\widetilde{\lambda}$ denote the pertubed contact form. The only other closed Reeb orbits of $\lambda$ with action less than $L$ are the two circles fibering above $(x(0),0)$ and $(0,y(1))$, which are elliptic. We denote them by $e_0$ and $e_1$.

An orbit set is a finite formal product
$\alpha=\prod_i \alpha_i^{m_i}$, where $\alpha_i$ is a simple Reeb orbit and $m_i$ is positive integer. We always assume that $\alpha_i\neq \alpha_j$ if $i\neq j$ and $m_i=1$ if $\alpha_i$ is hyperbolic. The action of an orbit set is defined by
\[A_{\widetilde{\Omega}}(\alpha)=\sum_i m_i A_{\widetilde{\Omega}}(\alpha_i).\] The filtered ECH chain complex $ECC^L(\partial X_{\widetilde{\Omega}},\widetilde{\lambda})$ is the $\Z/2$ vector space generated by all orbit sets $\alpha$ such that \[A_{\widetilde{\Omega}}(\alpha)<L.\]
Under the indentification $e_{p,q}=((p,q),1)$ and $h_{p,q}=((p,q),1)$, we can see orbit sets as combinatorial orbit sets and their symplectic actions coincide\footnote{To be precise, the symplectic actions with respect to the perturbed contact form is bounded from the combinatorial action by a small constant which can be as small as desired for a given $L$}.
The differential of $ECC^L(\partial X_{\widetilde{\Omega}},\widetilde{\lambda})$ is obtained by counting pseudo-holomorphic curves in $\R\times \partial X_{\widetilde{\Omega}}$ whose ECH index is 1. We will not define the ECH index here. Instead it suffices to recall that in this setting the ECH index gives rise to an absolute index such that for each orbit set $\alpha$, $I(\alpha)$ is simply the combinatorial ECH index defined in \eqref{eq:i}. The fact that the original definition and the combinatorial definition coincide follows from very similar calculations to the one in the proof of \cite[Lemma 5.4]{beyond}, which uses previous calculations from the proof of \cite[Lemma 3.3]{concave}. Here we have a max instead of a min, because of the opposite concavity, as in \cite[Lemma 5.4]{beyond}. It is worth noting that the calculation of the first Chern class \cite[(3.14)]{concave} is almost identical and in our case it gives \begin{equation}\label{eq:ctau}c_\tau(\alpha)=x(\alpha)+y(\alpha)\end{equation} as defined in \eqref{eq:x} and \eqref{eq:y}.

The rest of the argument uses the cobordism map in ECH and the $J_0$-invariant. It is identical to the proof of \cite[Theorem 1.20]{beyond} using \eqref{eq:ctau}, where we note that the original and the combinatorial definitions of $h$ and $m$ coincide.
\end{proof}
We can now prove Theorem \ref{thm:cubewc}.
\begin{proof}[Proof of Theorem {\ref{thm:cubewc}}]
Suppose that $P_2(a)\hookrightarrow X_\Omega$. We can find a weakly convex toric domain $X_{\Omega'}\supset X_\Omega$ such that the tangent lines to the curve $\partial_+ \Omega'$ at the $x$ and $y$ axes have slopes $1-\delta$ and $1+\delta$ for some small $\delta>0$, respectively. For each $L>0$ sufficiently large and $\varepsilon>0$, we can choose $X_{\Omega'}$ so that \begin{equation}\label{eq:ax0y1}|A_{\Omega'}(e_{1,-1})-x(0)|<\varepsilon\quad\text{ and }\quad |A_{\Omega'}(e_{-1,1})-y(1)|<\varepsilon,\end{equation} and that\[|A_{\Omega}(e_{p,q})-A_{\Omega'}(e_{p,q})|<\varepsilon,\]
for all $(p,q)$ such that $A_{\Omega}(e_{p,q})<L$.

Now let $\alpha'=e_{1,-1}^d e_{-1,1}^d e_{1,1}^2.$ It follows from Theorem \ref{thm:beyond} that there exists an orbit set $\alpha$ and factorizations
\[\alpha=\prod_{j=1}^l \alpha_j,\quad\alpha'=\prod_{j=1}^l \alpha_j',\]
satisfying (a), (b) and (c). Using (b) and (c), we conclude that $l\le 3$ and that $\alpha_i=e_{1,-1}^{d_i} e_{-1,1}^{d_i} e_{1,1}^k$ for some $k\in\{0,1,2\}$ such that $d_i\ge d/3$. Using (a), it follows from properties (ii) and (iii) from Definition \ref{def:beyond} that
\begin{equation*}
\begin{aligned}
3k+2d_i-1&=x(\alpha_i')+y(\alpha_i')+m(\alpha_i')-1\le x(\alpha_i)+y(\alpha_i)\\&=\frac{A_{P_2(a)}(\alpha_i)}{a}\le \frac{A_{\Omega'}(\alpha_i)}{a}<\frac{(d_i(x(0)+y(1))+k)(1+\varepsilon)}{a}. 
\end{aligned}
\end{equation*}
Hence \[a<\frac{(d_i(x(0)+y(1))+k)(1+\varepsilon)}{2d_i+3k-1}.\]
Taking the limit as $d\to\infty$ and then as $\varepsilon\to 0$, it follows that
\[a\le \frac{x(0)+y(1)}{2}.\]
Therefore
\[c_P(X_\Omega)\le \frac{x(0)+y(1)}{2}.\]
\end{proof}

\bibliographystyle{alpha}
\bibliography{bibliography}

\end{document}